\numberwithin{equation}{section}
\theoremstyle{plain}
\newtheorem{theorem}{Theorem}[section]
\newtheorem{corollary}[theorem]{Corollary}
\newtheorem{lemma}[theorem]{Lemma}
\newtheorem{proposition}[theorem]{Proposition}
\theoremstyle{definition}
\newtheorem{definition}[theorem]{Definition}
\newtheorem{remark}[theorem]{Remark}
\theoremstyle{remark}
\newcommand{\OO}{\mathcal O}
\newcommand{\A}{\mathbb{A}}
\newcommand{\R}{\mathbb{R}}
\newcommand{\Q}{\mathbb{Q}}
\newcommand{\Z}{\mathbb{Z}}
\newcommand{\C}{\mathbb{C}}
\renewcommand{\H}{\mathbb{H}}
\newcommand{\zxz}[4]{\begin{pmatrix} #1 & #2 \\ #3 & #4 \end{pmatrix}}
\newcommand{\abcd}{\zxz{a}{b}{c}{d}}
\newcommand{\kzxz}[4]{\left(\begin{smallmatrix} #1 & #2 \\ #3 & #4\end{smallmatrix}\right) }
\newcommand{\norm}{\operatorname{N}}
\newcommand{\vol}{\operatorname{vol}}
\newcommand{\tr}{\operatorname{tr}}
\newcommand{\Spin}{\operatorname{Spin}}
\newcommand{\Hom}{\operatorname{Hom}}
\newcommand{\Aut}{\operatorname{Aut}}
\newcommand{\End}{\operatorname{End}}
\newcommand{\GL}{\operatorname{GL}}
\newcommand{\SO}{\operatorname{SO}}
\newcommand{\cha}{\operatorname{char}}
\newcommand{\diag}{\operatorname{diag}}
\newcommand{\ord}{\operatorname{ord}}
\newcommand{\kay}{k}
\newcommand{\Gspin}{\operatorname{GSpin}}
\newcommand{\ff}{\hbox{if }}
\newcommand{\SL}{\operatorname{SL}}
\newcommand{\gen}{\operatorname{gen}}
\begin{document}

\title[Quaternions and Kudla's  matching principle ]{Quaternions and Kudla's  matching principle }

\author[]{}
\dedicatory{}

\address{}
\email{}

\thanks{}

%\subjclass[2000]{14G35, 11F55, 11G18}

\date{\today}
\author[Tuoping Du and Tonghai Yang]{Tuoping Du and Tonghai Yang}
\address{Department of Mathematics, Nanjing University, Nanjing, 210093,  P.R. China}
\email{dtpnju@gmail.com}
\address{Department of Mathematics, University of Wisconsin Madison, Van Vleck Hall, Madison, WI 53706, USA}
\email{thyang@math.wisc.edu} \subjclass[2000]{11G15, 11F41, 14K22}
\thanks{T.Y. Yang is partially supported by a NSF grant DMS1200385 and a Chinese government grant}
\maketitle

\begin{abstract}
In this paper,  we   prove some interesting identities, among average representation numbers (associated to  definite quaternion algebras) and  `degree' of Hecke correspondences on Shimura curves (associated to indefinite quaternion algebras).

\end{abstract}

\maketitle

%\tableofcontents

\section{Introduction} \label{sect:introduction}

In this paper, we   prove some interesting identities relating two
different quaternion algebras using Kudla's matching principle
\cite[Section 4]{KuIntegral}.

Let $D$ be a square square free integer, and let $B=B(D)$ be the unique
 quaternion algebra of discriminant $D$ over $\Q$, i.e., $B$ is
 ramified at a finite prime $p$ if and only if $p|D$.  The reduced
 norm, denoted by $\det$ in this paper,   gives a canonical quadratic form $Q$ on $B$ and makes it a
 quadratic space $V=(B, \det)$.  For a positive integer $N$ prime to
 $D$, let $\OO_D(N)$ be an Eicher order of $B$ of conductor $N$,
 which is an even integral lattice of $V$, and  denote by
 $L$. The quaternion $B$ is definite if and only if $D$ has odd
 number of prime factors.
When $B$ is definite,
 it is a very interesting and hard  question to compute the
 representation number (for a positive integer $m$)
 $$
r_L(m)=|\{ x \in \OO_D(N):\, \det x =
m\}|.
 $$
 On the other hand, the
average over the genus $\gen(L)$, which we denote by
$$
r_{D, N}(m) =r_{\gen(L)}(m) =\bigg(\sum_{L_1 \in \gen(L)}
\frac{1}{|\Aut(L_1)|}\bigg)^{-1} \sum_{L_1 \in \gen(L)}
\frac{r_{L_1}(m)}{|\Aut(L_1)|}
$$
is product of so-called local densities, thanks to Siegel's seminal
work in 1930's \cite{siegel formula}, which are computable (see example
\cite{YaDensity}).  We remark that $\gen(L)$ consists of (equivalence
classes) of  right ideals of all maximal orders when $N=1$.  Notice
that $r_{D, N}$ depends only on $D$ and $N$ and is independent of
the choice of Eicher orders of conductor $N$. Using Kudla's matching principle (\cite{KuIntegral}, see also Section \ref{sect:Kudla}), we will prove the following theorem in Section  \ref{sect:definite}.

\begin{theorem} \label{theo1.1} Let $D$ be a square-free positive
integer with even number of prime factors,  let $p \ne q$ be two
different primes not dividing $D$, and let $N$ be a positive integer
prime to $Dpq$. Then
$$
-\frac{2}{q-1} r_{Dp, N}(m) + \frac{q+1}{q-1} r_{Dp, Nq}(m)
 =-\frac{2}{p-1} r_{Dq, N}(m) + \frac{p+1}{p-1} r_{Dq, Np}(m)
$$
for every positive integer $m$.
\end{theorem}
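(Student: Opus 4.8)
The plan is to prove the identity place by place, via Siegel's formula together with Kudla's matching principle. Since $Dp$ and $Dq$ both have an odd number of prime factors, the quaternion algebras $B(Dp)$ and $B(Dq)$ are both definite, so by Siegel's formula \cite{siegel formula} each of the four averaged representation numbers factors as a product of local densities
$$
r_{D',N'}(m) = W_\infty(m)\prod_\ell \beta_\ell(m, L_{D',N'}),
$$
where $W_\infty(m)$ is the archimedean factor. Because all four lattices are positive definite quaternary lattices, $W_\infty(m)$ is the same weight-$2$ archimedean Whittaker function in all four cases, depending only on $m$.

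First I would isolate the local factors that agree. The four Eichler orders $L_{Dp,N}$, $L_{Dp,Nq}$, $L_{Dq,N}$, $L_{Dq,Np}$ are \emph{locally isometric at every prime $\ell \notin\{p,q\}$}: at $\ell \mid D$ all four are ramified with the same trivial conductor, at $\ell \mid N$ all four are split Eichler orders of the same conductor $\ell^{\ord_\ell N}$, and at the remaining primes all four are maximal and split. Hence $\beta_\ell(m,\cdot)$ is literally the same for all four at each $\ell \ne p,q$. Writing $\beta_\ell^{\mathrm{nr}}(m)$ for the local density of the anisotropic (ramified) quaternion norm form, and $\beta_\ell^{(0)}(m),\beta_\ell^{(1)}(m)$ for those of the split Eichler orders of conductor $\ell^0,\ell^1$, and collecting the common part into a single factor $C=C(m)$, I obtain
$$
r_{Dp,N}=C\,\beta_p^{\mathrm{nr}}\beta_q^{(0)},\quad r_{Dp,Nq}=C\,\beta_p^{\mathrm{nr}}\beta_q^{(1)},\quad r_{Dq,N}=C\,\beta_p^{(0)}\beta_q^{\mathrm{nr}},\quad r_{Dq,Np}=C\,\beta_p^{(1)}\beta_q^{\mathrm{nr}}.
$$

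The heart of the argument is the local matching identity at a single place $\ell$,
$$
-\frac{2}{\ell-1}\,\beta_\ell^{(0)}(m)+\frac{\ell+1}{\ell-1}\,\beta_\ell^{(1)}(m)=\beta_\ell^{\mathrm{nr}}(m)\qquad\text{for all }m.
$$
This is precisely Kudla's matching principle \cite{KuIntegral} at $\ell$: the characteristic functions of the two split Eichler orders, combined with the coefficients $-\tfrac{2}{\ell-1}$ and $\tfrac{\ell+1}{\ell-1}$ (which sum to $1$), give a Schwartz function whose local Whittaker function at $\ell$ equals that of the ramified maximal order, the two local spaces differing only in Hasse invariant. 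Granting this, applying the identity at $q$ to the first two expressions collapses the left-hand side of the theorem to $C\,\beta_p^{\mathrm{nr}}\beta_q^{\mathrm{nr}}$, while applying it at $p$ to the last two collapses the right-hand side to $C\,\beta_q^{\mathrm{nr}}\beta_p^{\mathrm{nr}}$; the two agree, proving the theorem. Both sides are in fact the $m$-th local-density product for the \emph{indefinite} algebra $B(Dpq)$, which explains the appearance of Hecke-correspondence degrees on its Shimura curve.

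I expect the main obstacle to be the local matching identity itself. One route is conceptual, identifying the three characteristic functions with sections of the degenerate principal series of $\SL_2(\Q_\ell)$ at the Siegel--Weil point and checking the linear relation among the sections there; the other is computational, evaluating $\beta_\ell^{(0)}$, $\beta_\ell^{(1)}$, $\beta_\ell^{\mathrm{nr}}$ explicitly as functions of $\ord_\ell m$ (using the methods of \cite{YaDensity}) and verifying the identity directly. For $\ell\nmid m$ all three densities coincide and the identity reduces to the coefficients summing to $1$, so the content is concentrated in the case $\ell\mid m$; the dyadic case $\ell=2$ will require a separate, more delicate computation.
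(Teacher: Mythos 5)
Your architecture is essentially the paper's: everything rests on the matching of Proposition \ref{prop3.1}(1) at the two primes $p$ and $q$ together with the Siegel--Weil formula. The paper keeps the argument global --- it forms adelic Schwartz functions $\varphi^{(1)},\varphi^{(2)}$ whose component at $q$ (resp.\ $p$) is $-\tfrac{2}{q-1}\varphi_{q,0}^{sp}+\tfrac{q+1}{q-1}\varphi_{q,1}^{sp}$, notes that $\varphi_f^{(1)}=-\tfrac{2}{q-1}\cha(\hat\OO_{Dp}(N))+\tfrac{q+1}{q-1}\cha(\hat\OO_{Dp}(Nq))$, applies $I(\tau,\varphi^{(1)})=I(\tau,\varphi^{(2)})$ and only then takes the $m$-th Fourier coefficient --- whereas you unfold the same matching into an Euler product of local factors. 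That unfolding is where your write-up has a genuine error.

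The identity you call the heart of the argument, $-\tfrac{2}{\ell-1}\beta_\ell^{(0)}(m)+\tfrac{\ell+1}{\ell-1}\beta_\ell^{(1)}(m)=\beta_\ell^{\mathrm{nr}}(m)$, is an identity of local \emph{Whittaker functions} $W_{m,\ell}(e,s_0,\lambda(\varphi_\ell))$ --- that is exactly what matching in $I(1)$ gives, the Whittaker functional being linear in the section --- but it is \emph{not} an identity of classical Siegel local densities, because
$$
W_{m,\ell}\bigl(e,s_0,\lambda(\cha(L_\ell))\bigr)=\gamma(V_\ell)\,[L_\ell^\sharp:L_\ell]^{-1/2}\,\alpha_\ell(m,L_\ell),
$$
and these normalizing factors differ among the three lattices: $[L^{ra,\sharp}:L^{ra}]=[L_1^{sp,\sharp}:L_1^{sp}]=\ell^2$ while $M_2(\Z_\ell)$ is self-dual, and $\gamma(V_\ell^{ra})=-\gamma(V_\ell^{sp})$. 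In terms of the classical densities the correct relation is $\alpha^{\mathrm{nr}}=\tfrac{2\ell}{\ell-1}\alpha^{(0)}-\tfrac{\ell+1}{\ell-1}\alpha^{(1)}$. Your own sanity check exposes the problem: for $\ell\nmid m$ the three densities do \emph{not} coincide --- they are $1-\ell^{-2}$, $1-\ell^{-1}$ and $1+\ell^{-1}$ respectively --- so the identity does not ``reduce to the coefficients summing to $1$''; the sign $\gamma(V^{ra})=-\gamma(V^{sp})$ coming from the opposite Hasse invariants is essential. Consequently the computational route you propose (evaluating $\beta_\ell^{(0)},\beta_\ell^{(1)},\beta_\ell^{\mathrm{nr}}$ by the methods of \cite{YaDensity} and checking your displayed identity) would fail as stated. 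The global conclusion survives because the offending factors are independent of $m$ and recombine across the four lattices via the product formula for Weil indices and the discriminants $(DpN)^2,(DpNq)^2,(DqN)^2,(DqNp)^2$; to repair the proof you must either work throughout with the Whittaker normalization or carry the factors $\gamma(V_\ell)[L_\ell^\sharp:L_\ell]^{-1/2}$ explicitly. Your closing observation that the common value of the two sides is the coefficient attached to $B(Dpq)$ is correct: it is $r_{Dpq,N}'(m)$, i.e.\ Theorem \ref{theo1.4} applied with $Dp$ in place of $D$ and $q$ in place of $p$.
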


We remark that $r_{p, N}(m)$ has geometric interpretations. For
example,  Gross and Keating (\cite{GK}, \cite[Page 27]{Wed}) proves
$$
r_{p, 1}(m) =2 \left( \sum_E \frac{1}{|\Aut(E)|}\right)^{-2}
\sum_{(E, E')} \frac{r_{\Hom(E, E')}(m)}{|\Aut(E)| |\Aut(E')|}.
$$
Here the sums are over supersingular elliptic curves over
$\bar{\mathbb F}_p$, and $\Hom(E, E')$ is the quadratic lattice  of
isogenies  (from $E$ to $E'$) with degree as the quadratic form.
Replacing $E$ by a pair $( E,  C)$ where $C$ is a cyclic subgroup of $E$ of order $N$, and $\Hom(E, E')$ by $\Hom( (E, C) , (E', C') )$,
one gets $r_{p, N}(m)$ (see Section \ref{sect:definite} for detail). So Theorem \ref{theo1.1}  gives some relations between supersingular elliptic curves over different primes when we take $D=1$.   We also remark that $|\Aut(E)|$ has a simple formula (see  \cite{Gr2})

\begin{comment}
Notice that when $p>??$, one has
$|\Aut(E)|=2$. So  we have the following corollary.

\begin{corollary}  \label{cor1.2} Let $p$ and $q$ be two different primes bigger than ??. Then
\begin{align*}
&-\frac{2}{q-1} \sum_{(E_p, E_p')} r_{\Hom(E_p, E_p')}(m) +
\frac{q+1}{q-1} \sum_{ (E_p, C_q), (E_p', C_q')} r_{\Hom( (E_p, C_q),
(E_p', C_q'))}(m)
\\
&=-\frac{2}{p-1} \sum_{(E_q, E_q')} r_{\Hom(E_q, E_q')}(m) +
  \frac{p+1}{p-1} \sum_{ (E_q, C_p), (E_q', C_p')} r_{\Hom( (E_q, C_p),
(E_q', C_p'))}(m).
\end{align*}
Here $E_p$ means supersingular elliptic curves over $\bar{\mathbb
F}_p$ and $C_p$ means  cyclic subgroup of order $p$.
\end{corollary}
\end{comment}

When  $B(D)$ is indefinite, the representation number does not make sense anymore as a number can be represented  infinitely many times. In this case, the geometry of Shimura curves comes in.  We fix an embedding of $ i: B(D)\hookrightarrow M_2(\R)$ such that $B(D)^\times $ is invariant under the automorphism  $x \mapsto x^*= {}^tx^{-1}$ of $\GL_2(\R)$. Let   $\Gamma_0^D(N) =\OO_D(N)^1$ be the  group of (reduced) norm $1$ elements in $\OO_D(N)$ and   let  $X_0^D(N) = \Gamma_0^D(N) \backslash \H$ be the associated Shimura curve.  For a positive integer $m$, let $T_{D, N}(m)$ be the Hecke correspondence on $X_0^D(N)$ defined by \begin{equation}
T_{D, N}(m)=\{ ([z_1], [z_2]) \in X_0^D(N) \times  X_0^D(N):\,   z_1 = i(x)z_2  \hbox{ for some } x \in \OO_D(N),   \,   \det x =m\} .
\end{equation}
Define $\deg T_{D, N}(m) =\deg (T_{D, N}(m) \rightarrow X_0^D(N))$  under the  projection $([z_1], [z_2]) \mapsto [z_1]$.

 Let $\Omega_0 =-\frac{1}{4\pi} y^{-2} dx \wedge dy$ be the volume form  on $X_0^D(N)$ (associated to $\Omega_{X_0^D(N)}^\vee$), and let
 $$
 \vol(X_0^D(N), \Omega_0) = \int_{X_0^D(N)} \Omega_0
 $$
 be the volume of $X_0^D(N)$ with respect to $\Omega_0$, which is a  positive rational number (see (\ref{eq:volume})).

Finally,  we define (when $D$ has even number of prime factors)
\begin{equation}
r_{D, N}'(m)= \frac{1}{ \vol(X_0^D(N), \Omega_0)}\deg T_{D,  N}(m).
\end{equation}
Similar to  Theorem \ref{theo1.1}, we will prove the following theorems in Section  \ref{sect:Shimura}.

\begin{theorem}\label{theo1.3}  Let $D$ be a square-free positive
integer with odd number of prime factors,  let $p \ne q$ be two
different primes not dividing $D$, and let $N$ be a positive integer
prime to $Dpq$. Then
$$
-\frac{2}{q-1} r_{Dp, N}'(m) + \frac{q+1}{q-1} r_{Dp, Nq}'(m)
 =-\frac{2}{p-1} r_{Dq, N}'(m) + \frac{p+1}{p-1} r_{Dq, Np}'(m)
$$
for every positive integer $m$.
\end{theorem}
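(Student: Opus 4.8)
The plan is to reduce Theorem~\ref{theo1.3} to the very same local computation that powers Theorem~\ref{theo1.1}; the genuinely new ingredient is a Siegel--Weil bridge turning the normalized Hecke degree into a product of local densities. First I would show that $r_{D,N}'(m)$ is the $m$-th Fourier coefficient of the weight-$2$ genus-theta form attached to the quadratic space $V=(B(D),\det)$ and the Eichler order $\OO_D(N)$, hence, via Siegel--Weil, of the associated Eisenstein series on $\SL_2$. Concretely, rewriting $\deg T_{D,N}(m)$ adelically as a sum over the finite double-coset space of $B(D)^\times$ and invoking the Eichler mass formula, the factor $\vol(X_0^D(N),\Omega_0)^{-1}$ strips off the global volume and leaves a product
$$
r_{D,N}'(m)=W_\infty^{(2,2)}(m)\prod_{v<\infty}\alpha_v(m)
$$
of local representation densities. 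The point is that the finite densities $\alpha_v(m)$ are literally those of the definite case (Siegel's local densities do not see the archimedean signature), while the archimedean factor $W_\infty^{(2,2)}(m)$ now corresponds to signature $(2,2)$.

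Since $D$ has an odd number of prime factors, both $B(Dp)$ and $B(Dq)$ are indefinite of signature $(2,2)$, so the archimedean factor $W_\infty^{(2,2)}(m)$ is common to all four terms of the asserted identity for each fixed $m$. It therefore factors out, and what remains is an identity purely among finite local density products. This is exactly the identity established in the proof of Theorem~\ref{theo1.1}: there the common archimedean factor is $W_\infty^{(4,0)}(m)$ instead, but once it is removed the statement to be proved is the same finite identity. Thus Theorem~\ref{theo1.3} follows from Theorem~\ref{theo1.1} together with the bridge above; equivalently, one may simply re-run the matching argument directly.

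That matching argument is Kudla's principle applied at the two bad primes $p$ and $q$, where alone the four terms differ. At such a prime $\ell$ the $4$-dimensional local space is either ramified (the division algebra) or split with Eichler level $1$ or $\ell$, and in the local degenerate principal series of $\SL_2(\Q_\ell)$, writing $\lambda_\ell$ for the map sending a Schwartz function to its associated section, one has the section identity
$$
\lambda_\ell(\varphi^{\mathrm{ram}})=-\frac{2}{\ell-1}\,\lambda_\ell(\varphi^{\mathrm{split}}_{1})+\frac{\ell+1}{\ell-1}\,\lambda_\ell(\varphi^{\mathrm{split}}_{\ell}),
$$
where $\varphi^{\mathrm{split}}_{1},\varphi^{\mathrm{split}}_{\ell}$ are the characteristic functions of the Eichler orders of level $1$ and $\ell$ in $M_2(\Q_\ell)$ and $\varphi^{\mathrm{ram}}$ that of the maximal order in the division algebra. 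Applying this at $\ell=q$ converts the left-hand combination into the coefficient of the Eisenstein series ramified at $q$ (it is already ramified at $p$ through $B(Dp)$), and applying it at $\ell=p$ converts the right-hand combination into the coefficient ramified at $p$ (already ramified at $q$ through $B(Dq)$). Both sides thus equal the $m$-th coefficient of one and the same incoherent Eisenstein series --- ramified at $p$, $q$ and the primes of $D$, with level-$N$ data elsewhere --- proving the theorem.

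I expect the main obstacle to be the bridge of the first paragraph: making rigorous that the geometric degree of the Hecke correspondence on the Shimura curve, divided by $\vol(X_0^D(N),\Omega_0)$, really is the product of the same finite local densities as the definite average representation number, with only the archimedean factor changed. This demands an honest adelic reformulation of $\deg T_{D,N}(m)$, the Eichler--Shimizu mass formula, and care with the non-absolutely-convergent weight-$2$ Siegel--Weil normalization. The local identity above is by contrast a finite linear-algebra computation among Iwahori-fixed sections; the appearance of $\ell+1=\#\mathbb{P}^1(\F_\ell)$ makes the coefficients natural, but pinning down exactly the stated constants, and checking that $W_\infty^{(2,2)}(m)$ is common and nonzero so that it may be cancelled, are the remaining points needing care.
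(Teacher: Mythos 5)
Your proposal is essentially correct, and its operative version (the ``re-run the matching argument directly'' of your second and third paragraphs) is exactly the paper's proof: one applies the local section identity of Proposition~\ref{prop3.1}(1) at $\ell=q$ on the $B(Dp)$-side and at $\ell=p$ on the $B(Dq)$-side, so that $\varphi^{(1)}=\prod_\ell\varphi^{(1)}_\ell$ on $V(Dp)(\A)$ and $\varphi^{(2)}$ on $V(Dq)(\A)$ match everywhere, and then Proposition~\ref{prop3.5} gives $I(\tau,\varphi^{(1)})=I(\tau,\varphi^{(2)})$; comparing $m$-th Fourier coefficients yields the identity. Where you diverge is in the ``bridge'' identifying $r'_{D,N}(m)$ with those Fourier coefficients: you propose an adelic recount of $\deg T_{D,N}(m)$ plus the Eichler mass formula, arriving at a factorization $W^{(2,2)}_\infty(m)\prod_v\alpha_v(m)$ into local densities. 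The paper instead gets the bridge geometrically, from Kudla's formula (\cite[Theorem 4.23, Section 4.8]{KuIntegral}) expressing the coefficients of $I(\tau,\varphi_f\varphi^{sp}_\infty)$ as $\vol(X_K,\Omega^2)^{-1}\int_{Z(m,\varphi_f)}\Omega$, together with Lemmas~\ref{lem5.1} and \ref{lem5.2} identifying $X_K$ with $X_0^D(N)\times X_0^D(N)$ and $Z(m,\cha(\hat L))$ with $T_{D,N}(m)$; no explicit local-density computation or mass formula is needed, which is the cleaner route and avoids the convergence/normalization worries you flag. Two small corrections: the resulting Eisenstein series is \emph{coherent} (its section comes from the global space $V(Dp)$, and equally from $V(Dq)$), not incoherent --- an incoherent series would vanish at $s_0$ and trivialize the identity; and Theorem~\ref{theo1.3} does not literally ``follow from Theorem~\ref{theo1.1},'' since the underlying global spaces differ at the primes dividing $D$ --- what is reused is only the local matching at $p$ and $q$, which is why the direct argument is the right formulation.
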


\begin{theorem} \label{theo1.4}  Let $D$ be a square-free positive
integer with odd number of prime factors, let $p \nmid D$ be a prime, and   let $N$ be a positive integer
prime to $Dp$. Then
$$
r_{Dp, N}'(m)= -\frac{2}{p-1} r_{D, N}(m) + \frac{p+1}{p-1} r_{D, Np}(m).
$$

\end{theorem}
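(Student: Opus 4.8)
The plan is to apply Kudla's matching principle (Section~\ref{sect:Kudla}) to the two quaternary quadratic spaces $V=(B(D),\det)$ and $V'=(B(Dp),\det)$. Because $D$ has an odd number of prime factors, $V$ is positive definite of signature $(4,0)$ while $V'$ is indefinite of signature $(2,2)$. Comparing Hasse invariants place by place, $V$ and $V'$ become isometric over $\Q_\ell$ for every $\ell\notin\{p,\infty\}$: both are split at primes $\ell\nmid Dp$, both are ramified at the primes $\ell\mid D$, and they disagree only at $p$ (where $V$ is split but $V'$ is ramified) and at $\infty$ (where $V$ is definite but $V'$ is split). This two-place configuration is exactly the situation handled by the matching principle, and the single finite place $p$ at which they differ is responsible for the two terms on the right-hand side.

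First I would realize both sides as Fourier coefficients of weight-$2$ Siegel--Eisenstein series. By Siegel's formula, $r_{D,N}(m)=\kappa\prod_v W_v(m,\mathbf 1_{\OO_D(N)_v})$ and $r_{D,Np}(m)=\kappa\prod_v W_v(m,\mathbf 1_{\OO_D(Np)_v})$, where $W_v$ is the local Whittaker function (local density) for $V$ and $\kappa$ is a global constant depending only on $\dim V=4$. On the indefinite side, the Siegel--Weil formula for the Shimura curve identifies $\sum_m \deg T_{Dp,N}(m)\,q^m$, after division by $\vol(X_0^{Dp}(N),\Omega_0)$, with a holomorphic weight-$2$ Eisenstein series for $V'$; the volume normalization plays exactly the role of the mass factor on the definite side, so $r'_{Dp,N}(m)=\kappa\prod_v W'_v(m,\mathbf 1_{\OO_{Dp}(N)_v})$ with the same $\kappa$. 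Since both series are holomorphic of weight $2$, the archimedean factors agree, $W_\infty=W'_\infty$; and since $V_\ell\cong V'_\ell$ with identical level for every finite $\ell\ne p$, the local factors at those places coincide as well. Cancelling the common constant $\kappa$ and the common factors at all $v\ne p$ therefore reduces the theorem to the single local identity at $p$,
\begin{equation}\label{eq:localp}
W'_p\bigl(m,\mathbf 1_{\OO_{Dp}(N)_p}\bigr)
=-\frac{2}{p-1}\,W_p\bigl(m,\mathbf 1_{\OO_D(N)_p}\bigr)
+\frac{p+1}{p-1}\,W_p\bigl(m,\mathbf 1_{\OO_D(Np)_p}\bigr),
\end{equation}
in which $\OO_{Dp}(N)_p$ is the maximal order of the division algebra $B(Dp)_p$, while $\OO_D(N)_p\cong M_2(\Z_p)$ is maximal and $\OO_D(Np)_p$ is the Eichler order of level $p$ inside the split algebra $M_2(\Q_p)$.

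The main obstacle is proving the local identity~\eqref{eq:localp}, i.e.\ computing the three local representation densities explicitly and checking that the ramified one is precisely the indicated combination of the two split ones, uniformly in $m$. I would evaluate each density as a local orbital integral using the reduction theory of quaternary $\Z_p$-lattices, or equivalently invoke the explicit local density formulas of~\cite{YaDensity}; the coefficients $-\tfrac{2}{p-1}$ and $\tfrac{p+1}{p-1}$ should then emerge from how the maximal and the level-$p$ Eichler lattices sit inside $M_2(\Q_p)$. Once \eqref{eq:localp} is verified, multiplying back the common local factors yields the theorem at once. I would isolate this local computation as a standalone lemma, since the very same matching at $p$ (and, symmetrically, at $q$) is what produces the identical coefficients in Theorems~\ref{theo1.1} and~\ref{theo1.3}.
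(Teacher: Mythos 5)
Your global strategy is the same as the paper's: both arguments hinge on the fact that $V(D)$ and $V(Dp)$ are isometric at every place outside $\{p,\infty\}$, invoke the (extended) Siegel--Weil formula on each side --- Siegel's theorem identifying $r_{D,N}, r_{D,Np}$ with genus averages on the definite side, and Kudla's geometric Siegel--Weil identifying $\sum_m \deg T_{Dp,N}(m)q^m/\vol(X_0^{Dp}(N),\Omega_0)$ with the theta integral on the indefinite side --- and then reduce everything to a single local statement at $p$ (the archimedean matching being Kudla's Proposition \ref{prop3.4}). Where you diverge is in how that local statement is formulated and proved. You propose to verify, for every $m$, an identity of local Whittaker functions (representation densities) relating the maximal order of $B^{ra}_p$ to the maximal and level-$p$ Eichler orders of $M_2(\Z_p)$, by explicitly computing all three densities via \cite{YaDensity}. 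The paper instead proves the stronger statement that the local \emph{sections} match: $\lambda^{ra}(\varphi^{ra})=-\tfrac{2}{p-1}\lambda^{sp}(\varphi_0^{sp})+\tfrac{p+1}{p-1}\lambda^{sp}(\varphi_1^{sp})$ in $I(1)$ (Proposition \ref{prop3.1}(1)). The point that makes this cheap is that all three sections lie in the two-dimensional space $I(1)^{K_0(p)}$, so the identity is verified by evaluating at the two coset representatives $1$ and $w$, and the coefficients $-\tfrac{2}{p-1}$, $\tfrac{p+1}{p-1}$ fall out of a $2\times 2$ linear system (using $\gamma(V^{sp})=-\gamma(V^{ra})$). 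Your route would work --- matching of Whittaker functions at every place is all the global argument actually consumes --- but it trades a two-point evaluation for a family of density computations indexed by $m$, and that computation is precisely the step you have not carried out (you say the coefficients ``should then emerge''). As it stands your proposal correctly isolates the crux but leaves it unproved; if you want to complete it along the paper's lines, replace the density computation by the observation that your three test functions are $K_0(p)$-invariant and compare values at $1$ and $w$ only.
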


\begin{theorem} \label{theo1.5}  Let $D >1$ be a square-free positive
integer with even number of prime factors, let $p \nmid D$ be a prime, and   let $N$ be a positive integer
prime to $Dp$. Then
$$
r_{Dp, N}(m)= -\frac{2}{p-1} r_{D, N}'(m) + \frac{p+1}{p-1} r_{D, Np}'(m).
$$
\end{theorem}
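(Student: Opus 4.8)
The plan is to apply Kudla's matching principle (Section~\ref{sect:Kudla}) to the two four-dimensional quadratic spaces $V_1=(B(Dp),\det)$ and $V_2=(B(D),\det)$. Since $D$ has an even number of prime factors, $B(Dp)$ is ramified at the primes of $Dp$ together with $\infty$, hence $V_1$ is positive definite of signature $(4,0)$, while $B(D)$ is ramified only at the primes of $D$, so $V_2$ is indefinite of signature $(2,2)$. Both are quaternion norm forms, so they share the (square) discriminant and the trivial character $\chi_{V}$; comparing Hasse invariants shows that $V_1$ and $V_2$ are isometric at every place except $p$ and $\infty$, where $V_1$ is anisotropic and $V_2$ is split. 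Changing the local space at exactly these two places respects the Hasse product formula, which is precisely the configuration to which the matching principle applies.

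First I would place both quantities into the Weil representation of the dual pair $(\Orth(V),\SL_2)$: a Schwartz function $\phi\in S(V(\A))$ determines a section $\lambda(\phi)$ of the degenerate principal series $I(s_0,\chi_V)$ of $\SL_2(\A)$ at the point $s_0$ attached to $\dim_\Q V=4$, and the Eisenstein series $E(g,s_0,\lambda(\phi))$ depends only on the section and not on the space from which $\phi$ was induced. Thus whenever $\lambda(\phi_1)=\lambda(\phi_2)$ the two theta integrals, and hence their $m$-th Fourier coefficients, coincide. Next I would identify the two sides geometrically via Siegel--Weil. On the definite side the theta integral for $V_1$ is the genus-averaged theta series of $\OO_{Dp}(N)$, whose $m$-th coefficient is $r_{Dp,N}(m)$, taking $\phi_{1}$ to be the characteristic function of $\OO_{Dp}(N)$ times the standard Gaussian at $\infty$. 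On the indefinite side, integrating the theta kernel for $V_2$ against the volume form $\Omega_0$ over $X_0^D(N)$ computes $\deg T_{D,N}(m)$, and dividing by $\vol(X_0^D(N),\Omega_0)$ yields exactly $r_{D,N}'(m)$; the normalization by $\Omega_0$ is what converts the archimedean matching into an equality rather than a proportionality.

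The two-term right-hand side comes from the local matching at $p$. On the anisotropic side $V_{1,p}$ is the division quaternion algebra over $\Q_p$, with Schwartz function the characteristic function of its maximal order; the computation in Section~\ref{sect:Kudla} expresses the associated section as
\[
\lambda\bigl(\mathrm{char}(\OO_{B_p})\bigr)=-\tfrac{2}{p-1}\,\lambda\bigl(\mathrm{char}(M_2(\Z_p))\bigr)+\tfrac{p+1}{p-1}\,\lambda\bigl(\mathrm{char}(\text{Eichler order of level }p)\bigr)
\]
in $I(s_0,\chi_V)$ for the split space $V_{2,p}=M_2(\Q_p)$. Globally the level-$1$ and level-$p$ orders at $p$ correspond to Eichler orders of conductor $N$ and $Np$ in $B(D)$, so equating $m$-th Fourier coefficients term by term gives
\[
r_{Dp,N}(m)=-\tfrac{2}{p-1}\,r_{D,N}'(m)+\tfrac{p+1}{p-1}\,r_{D,Np}'(m),
\]
which is the assertion.

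The main obstacle will be the archimedean matching together with the Siegel--Weil interpretation on the indefinite side: because $V_2$ is isotropic, $\SO(V_2)$ is noncompact and the theta integral does not converge naively, so one must invoke the regularized Siegel--Weil formula and check that, with the normalization by $\Omega_0$, the regularized integral has $m$-th coefficient $\deg T_{D,N}(m)/\vol(X_0^D(N),\Omega_0)$. Verifying that the $(4,0)$ Gaussian and the corresponding Schwartz form on $(2,2)$ produce the same archimedean section, so that the two normalizations agree, is the delicate point; once this and the level-$p$ local identity are in hand (both supplied by Section~\ref{sect:Kudla}), the theorem follows by matching Fourier coefficients place by place.
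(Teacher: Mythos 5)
Your proposal follows essentially the same route as the paper: Kudla's matching principle applied to $V(Dp)$ (definite) and $V(D)$ (signature $(2,2)$), with the local matching at $p$ from Proposition \ref{prop3.1}, the archimedean matching of Proposition \ref{prop3.4}, and the identification of the $m$-th coefficient of the indefinite theta integral with $\deg T_{D,N}(m)/\vol(X_0^D(N),\Omega_0)$ from Section \ref{sect:Shimura}. The one correction: your ``main obstacle'' is not an obstacle, since for $D>1$ the norm form of the division algebra $B(D)$ is anisotropic over $\Q$ (despite being indefinite over $\R$), so the theta integral converges absolutely and the ordinary Siegel--Weil formula (Theorem \ref{theo:Siegel-Weil}) applies with no regularization --- this is exactly why the theorem assumes $D>1$.
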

 This paper is organized as follows. In Section \ref{sect:Kudla}, we review Weil representation and Kudla's matching principle in general case.  In Section  \ref{sect:matching}, we prove some local matching between division and matrix quaternion algebras over a local field. In Section \ref{sect:definite} we look at two global quaternions different at two primes carefully and prove Theorem \ref{theo1.1}. In Section  \ref{sect:Shimura}. we associate product of two Shimura curves to the quadratic space coming from an indefinite quaternion and compute the theta integral via `degree' of Hecke correspondence, and prove Theorems \ref{theo1.3}, \ref{theo1.4} and \ref{theo1.5}.

This paper was inspired by Kudla's matching principle. We thank him for his influence and help. We thank Xinyi Yuan for his help to make Section 5 cleaner.  The first author thanks the Department of Mathematics,  University of Wisconsin  at Madison for providing him excellent working and learning environment during his visit. The second author thanks MPIM at Bonn and Tsinghua Math Science Center for providing excellent working conditions during summer 2012, where he did part of this work.

\section{Preliminaries and Kudla's matching principle} \label{sect:Kudla}

Let $\psi: \mathbb{A}/\mathbb{Q} \rightarrow \mathbb{C}$ be the canonical unramified additive character, such that $\psi_{\infty}(x)=e^{2\pi ix}$.
Let $(V, Q)$ be a nondegenerate quadratic space over $\mathbb{Q}$ of even dimension m with the quadratic form $Q$, and let
$$
\chi_V(x) = (x, (-1)^{\frac{m(m-1)}2} \det V)_\A
$$
be the associated quadratic character.
Let  $\omega=\omega_{\psi, V}$ be the associated Weil represenation of $O(V)(\mathbb{A})\times SL_{2}(\mathbb{A})$ on $S(V(\mathbb{A}))$, where $S(V(\mathbb{A}))$ is the Schwartz-Bruhat function space. The orthogonal group $ O(V)(\mathbb{A})$ acts on $ S(V(\mathbb{A}))$ linearly,
\begin{center}
$\omega (h)\varphi (x)=\varphi(h^{-1}x)$.
\end{center}
The $\SL_2(\A)$-action is determined by (see for example \cite{KuSplit})
\begin{eqnarray}\label{weilrep}
&\omega(n(b))\varphi(x)=\psi(bQ(x))\varphi(x), \nonumber\\
&\omega(m(a))\varphi(x)=\chi_V(x) \mid a\mid^{\frac{m}2} \varphi(ax), \\
&\omega(w)\varphi= \gamma(V)  \widehat{\varphi} = \gamma(V)\int_{V(\A)}\varphi(y)\psi((x, y))dy ,\nonumber
\end{eqnarray}
where for $a \in \A^\times$, $b \in \A$
\begin{center}
$n(b)=
\left(
  \begin{array}{cc}
    1 & b \\
     & 1 \\
  \end{array}
\right),
m(a)=
\left(
  \begin{array}{cc}
    a &  \\
     & a^{-1} \\
  \end{array}
\right)
,
w=\left(
  \begin{array}{cc}
     & 1 \\
     -1&  \\
  \end{array}
\right),$
\end{center}
 $dy$ is the Haar measure on $V(\A)$ self-dual with respect to $\psi ((x, y))$, and $\gamma(V)$ is a $8$-th root of unity (Weil index).  Similarly, one has Weil representation at each  prime $p$, which we still denote $\omega$ if there is no confusion. Let  $P=NM$  be the standard Borel subgroup of $\SL_2$, where $N$ and $M$ are subgroups of $n(b)$ and $m(a)$ respectively.  It is well-known that the theta kernel (\cite{Weil})
\begin{equation}
\theta(g, h, \varphi)=\sum_{x\in V(\mathbb{Q})} \omega(g)\varphi(h^{-1}x), \quad \varphi \in S(V(\A))
\end{equation}
is left $O(V)(\Q)\times \SL_2(\Q)$-invariant and is thus an automorphic form on $[O(V)]\times [\SL_2]$. Here denote $[G]=G(\Q) \backslash G(\A)$ for an algebraic group $G$ over $\Q$.  So
the theta  integral
\begin{equation}
I(g, \varphi)=\frac{1}{\vol([O(V)])} \int_{[O(V)]} \theta(g, h, \varphi)dh
\end{equation}
is an automophic form on $[\SL_2]$ if the integral is absolutely convergent, which is the case precisely when  V is antisotropic or  $dim(V)-r>2$, where $ r$ is the Witt index of V. There is another way to construct automorphic forms from $\phi \in S(V(\A))$ via Eisenstein series.

 For $s \in \mathbb{C}$, let $I(s, \chi_V)$ be the principal series representation of $\SL_2(\A)$ consisting of smooth functions $\Phi(s)$ on $\SL_2(\A)$ such that
\begin{equation} \label{eq:1.1}
\Phi(nm(a)g, s) =
\chi_{V}(a)|a|^{s+1}\Phi(g, s).
\end{equation}
There is a $\SL_2(\A)$-intertwining map ($s_0= \frac{m}2 -1$)
\begin{equation}
\lambda=\lambda_{V} : S(V(\mathbb{A})) \rightarrow I(s_{0}, \chi_{V}), \quad \lambda(\varphi)(g)= \omega(g)(0).
\end{equation}
Let $K_{\infty}K$ be the subgroup
$SO_2(\R)\times \SL_{2}(\hat{\mathbb{Z}})$ in $\SL_2(\A)$.  A section $\Phi(s)\in I(s, \chi)$ is called
standard if its restriction to $K_{\infty}K$ is
independent of s. By Iwasawa decomposition  $G(\mathbb{A})=N(\mathbb{A})M(\mathbb{A})K_{\infty}K$, the function
$\lambda(\varphi)\in I(s_{0}, \chi)$ has a unique extension to a
standard section $\Phi(s)\in I(s, \chi)$, where
$\Phi(s_{0})=\lambda(\varphi)$. The Eisenstein series is given by
\begin{equation}
E(g, s, \varphi)= \sum_{\gamma\in P \setminus \SL_2({Q})}
\Phi(\gamma g, s).
\end{equation}
When $V$ is antisotropic or that $dim(V)-r>2$, Kudla and Rallis  (\cite{KR1} \cite{KR2})  proved that the Eisenstein series is holomorphic at $s=s_0$ (extending Weil's classical work \cite{Weil}) and produces an automorphic form $[\SL_2]$.  The two ways (theta integral and Eisenstein series) give the same automorphic form---the well-known Siegel-Weil formula as extended by Kudla and Rallis (\cite{KR1}, \cite{KR2}).

\begin{theorem} (Siegel-Weil formula) \label{theo:Siegel-Weil}
Assume that V is antisotropic or that $dim(V)-r>2$, where r is the Witt index of V, so that the theta  integra is absolutely convergent. Then  $E(g, s; \Phi)$ is holomorphic at the point $s_{0}=m/2-1$, where $m=dim(V)$, and
\begin{center}
$E(g, s_{0}, \Phi)=\kappa I(g, \varphi)$,
\end{center}
 where $\kappa=2$ when $m\leq2$ and $\kappa=1$ otherwise.\\
\begin{comment}
(2) If $m>1$, then
\begin{center}
$E(g, s_{0}, \phi)=\kappa I(g, \varphi)=\frac{\kappa}{\vol([\SO(V)]} \int_{[SO(V)]} \theta(g, h, \varphi)dh $.
\end{center}
\end{comment}
\end{theorem}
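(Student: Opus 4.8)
The statement is the Siegel--Weil formula in Weil's convergent range, so the plan is to prove the equality of the two automorphic forms $E(\,\cdot\,,s_0,\Phi)$ and $\kappa\,I(\,\cdot\,,\varphi)$ on $[\SL_2]$ by matching their Fourier--Whittaker expansions. Both functions are left $\SL_2(\Q)$-invariant (the Eisenstein series by construction, the theta integral because the kernel $\theta(g,h,\varphi)$ is left $\SL_2(\Q)$-invariant), both transform under $n(b)$ and the center in the same way, and both have moderate growth once the theta integral converges. Hence it suffices to set $F=E(\,\cdot\,,s_0,\Phi)-\kappa\,I(\,\cdot\,,\varphi)$ and show that for every $t\in\Q$ the $t$-th Fourier coefficient
\begin{equation*}
F_t(g)=\int_{[N]}F(n(b)g)\,\psi(-tb)\,db
\end{equation*}
vanishes. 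By the $\SL_2(\A)$-equivariance $I(g,\varphi)=I(1,\omega(g)\varphi)$ and the analogous statement $\Phi(g,s_0)=\lambda_V(\omega(g)\varphi)(1)$ coming from the intertwining map $\lambda_V$, it is enough to match the two sides as $\SL_2(\A)$-intertwining functionals of $\varphi$, which lets the computation factor over the places of $\Q$.

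On the theta side I would unfold the orbital integral. The kernel is a sum over $x\in V(\Q)$, and by Witt's theorem the $\Orth(V)(\Q)$-orbits on $V(\Q)$ are $\{0\}$, the orbits $\Omega_t=\{x:Q(x)=t\}$ for $t\in\Q^\times$ represented by $V$, and (only in the isotropic case) the nonzero isotropic cone. Collapsing the $h$-integral against each orbit turns $\int_{[\Orth(V)]}\theta(1,h,\varphi)\,dh$ into a sum of orbital integrals, one per orbit; for $t\neq 0$ the stabilizer of a base point $x_t$ is $\Orth(x_t^{\perp})$, and the resulting integral factors as a product of local orbital integrals
\begin{equation*}
O_{t,v}(\varphi_v)=\int_{\Omega_t(\Q_v)}\varphi_v(x)\,d\mu_{t,v}(x),
\end{equation*}
the local representation densities attached to $V_v$. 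Thus the $t\neq0$ Fourier coefficient of $I(1,\varphi)$ is (up to fixed measure normalizations) $\prod_v O_{t,v}(\varphi_v)$.

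On the Eisenstein side I would carry out the standard Bruhat unfolding of $E(g,s,\Phi)$ against $\psi(-tb)$, producing for $t\neq0$ the factorizable Whittaker coefficient $\prod_v W_{t,v}(s_0)$ with
\begin{equation*}
W_{t,v}(g_v,s,\Phi_v)=\int_{N(\Q_v)}\Phi_v(w\,n(b)g_v,s)\,\psi_v(-tb)\,db .
\end{equation*}
The crucial input is the \emph{local Siegel--Weil identity}: for each place $v$ and each $t\neq0$ one has $W_{t,v}(1,s_0,\Phi_v)=O_{t,v}(\varphi_v)$ once $\Phi_v=\lambda_V(\varphi_v)$, which is proved by a direct evaluation of the local integral using the Weil-representation formulas (\ref{weilrep}) (in particular the action of $w$ by the Fourier transform with the Weil index $\gamma(V)$) together with the definition of $\lambda_V$. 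Matching the local factors then forces every nonzero Fourier coefficient of $F$ to vanish, and this part is essentially mechanical.

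The main obstacle is the constant term $t=0$, which is where both the holomorphy at $s_0$ and the precise constant $\kappa$ live. Here I would invoke the Kudla--Rallis analysis (extending Weil) that $E(g,s,\Phi)$ is holomorphic at $s_0=\tfrac m2-1$ in the convergent range, and then compare the constant term of the Eisenstein series, which equals $\Phi(g,s)+\bigl(M(s)\Phi\bigr)(g)$ with $M(s)$ the standard intertwining operator, against the $t=0$ contribution to $I(1,\varphi)$, namely the $\{0\}$-orbit term plus, when $V$ is isotropic, the isotropic-cone term. The delicate point is the evaluation of $M(s)$ at $s_0$: its behaviour degenerates in low dimension, and the jump in its leading value between $m\le 2$ and $m>2$ is exactly what produces $\kappa=2$ versus $\kappa=1$. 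Once the constant terms are shown to agree and all higher coefficients vanish, $F$ has identically zero Fourier expansion and therefore $F\equiv 0$, giving $E(g,s_0,\Phi)=\kappa\,I(g,\varphi)$.
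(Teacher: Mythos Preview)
The paper does not prove this theorem at all: it is stated as a known result and attributed to Weil \cite{Weil} and Kudla--Rallis \cite{KR1,KR2}, with no argument given. So there is no ``paper's own proof'' to compare against; the theorem functions as a black box in the paper.

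Your outline is a reasonable sketch of how the result is actually proved in those references, via Fourier-coefficient matching and the local Siegel--Weil identity for nonzero coefficients. But there is a circularity in your handling of the constant term. You write that you ``would invoke the Kudla--Rallis analysis \ldots\ that $E(g,s,\Phi)$ is holomorphic at $s_0$ in the convergent range,'' and then use this to compare constant terms. That analysis \emph{is} the content of the theorem you are trying to prove: the holomorphy assertion and the evaluation of $M(s_0)\Phi$ (which produces $\kappa$) are precisely what \cite{KR1,KR2} establish. A self-contained argument would have to (i) show directly that the intertwining operator $M(s)$ is holomorphic at $s_0$ on the image of $\lambda_V$ under the stated hypotheses, and (ii) compute $M(s_0)\Phi$ explicitly and match it against the $\{0\}$-orbit and (when $V$ is isotropic) the nonzero-isotropic contributions on the theta side. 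The latter step is where the genuine work lies, and it is not ``essentially mechanical'': the isotropic-cone orbital integral does not factor over places in the same clean way, and one needs the inner-product formula or a Poisson-summation argument to close the comparison. As written, your proposal identifies the right structure but defers the hard step back to the literature it is meant to reproduce.
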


Let $V^{(1)}, V^{(2)}$ be two quadratic spaces with the same dimension and the same character $\chi$, then there is a following graph
\begin{equation}
\setlength{\unitlength}{1mm}
\begin{picture}(60, 20)
\linethickness{1pt}
\put(0,18){$S(V^{(1)}(\mathbb{A}))$}
\put(0,0){$S(V^{(2)}(\mathbb{A}))$}
\put(18,18){ \vector(3,-1){25}}
\put(18,0){ \vector(3,1){25}}
\thicklines
\put(45,8){$I(s_{0},\chi)$}
\put(25,16){$\lambda_{V^{(1)}}$}
\put(25,6){$\lambda_{V^{(2)}}$}
\end{picture}.
\end{equation}

Following Kudla \cite{KuIntegral},  we make the following definition.

\begin{definition} For an prime $p \le \infty$, $\varphi_p^{(i)} \in S(V_p^{(i)})$ are said to be matching if
$$
\lambda_{V_p^{(1)}} (\varphi_p^{(1)}) = \lambda_{V_p^{(2)}}(\varphi_p^{(2)}).
$$
$\varphi^{(i)}=\prod_p \varphi_p^{(i)} \in S(V^{(i)}(\A))$ are said to be matching if they match at each prime $p$.
\end{definition}

By the Siegel-Weil formula, we have the following Kudla matching principle (\cite{KuIntegral}):  Under the assumption of Theorem  \ref{theo:Siegel-Weil} for both $V^{(1)}$ and $V^{(2)}$, one has,   for a
  matching  pair $(\varphi^{(1)}, \varphi^{(2)})$,
\begin{equation} \label{eq:matching}
I(g, \varphi^{(1)})=I(g, \varphi^{(2)}).
\end{equation}

An lattice  $L$ of $V$ is a free $\Z$-submodule of $V$ of rank $m$. It is even integral if $Q(x) \in \Z$ for every $x \in L$. We define  the dual of $L$ as
$$
L^\sharp=\{ x \in  V:\,  (x, L) \subset \Z \}.
$$
Locally,
$$
L_p^\sharp =\{ x \in  V_p :\,  (x, L_p) \subset \Z_p \}.
$$
$L_p$ is called self-dual if $L_p^\sharp =L_p$.

\section{Matchings on quaternions}  \label{sect:matching}

Over a local field $\Q_p$, there are two quaternions: the  matrix algebra  $B^{sp}=M_2(\Q)$ (split quaternion) and the unique division quaternion $B^{ra}$ (ramified  quaternion). Let $V=V^{sp}$ or $V^{ra}$ be the associated quadratic space with reduced norm  as the quadratic form $det(x) =x x^\iota$, where $x^\iota$ is the main involution on quaternion algebra $B$. Both spaces have trivial quadratic character $\chi_V$. So we have $\SL_2(\Q_p)$-intertwining operators
$$
\lambda: S(V) \rightarrow I(1), \varphi \mapsto \lambda(\varphi)=\omega_V(g) \varphi(0).
$$
Here $I(s) =I(s, \hbox{trivial})$. We will use superscript $ra$  and $sp$ to indicate the  association with  division or matrix  quaternion algebra. It is known (\cite{KuIntegral}) that $\lambda^{sp}$ is surjective while the image of $\lambda^{ra}$ is of codimension $1$. So every function $\varphi^{ra}$ has some matching in $S(V^{sp})$. The purpose of this section is to give some explicit matchings and obtain some interesting global identities. In next section, we will give arithmetic and geometric interpretations of these identities in special cases.

\subsection{ The finite prime case $p <\infty$}  We assume $p <\infty$ in this subsection. Let $L^{ra}=\OO_{B^{ra}}$ be the maximal order of $B^{ra}=B_p^{ra}$, which consists of all elements of $B$ whose reduced norm is in $\Z_p$. We don't use the subscript $p$ for simplicity in this subsection.  Let $L_0^{sp}=M_2(\Z_p)$ and
$$
L_1^{sp}= \{ A =\abcd \in M_2(\Z_p):\,  c \equiv 0 \pmod  p \}.
$$
Then
$$
L^{ra, \sharp} =\pi^{-1} L^{ra}, \quad  L_1^{sp, \sharp} = \{ \abcd \in M_2(\Q_p):\,  a, c, d \in \Z_p,  b \in \frac{1}p \Z_p\}.
$$
Here $\pi \in B^{ra}$ is a `uniformizer', i.e., $\pi^\iota =-\pi$ and $ \pi^2 =p$. We denote
$$
\varphi^{ra} =\cha (L^{ra}), \quad  \varphi^{ra, \sharp}= \cha (L^{ra, \sharp}), \quad
$$
and
\begin{equation} \label{eq:varphi}
\varphi_i^{sp} =  \cha(L_i^{sp}),  \quad  i=0, 1,  \quad  \hbox{ and }  \varphi_2^{sp}= \cha(L_1^{sp, \sharp}).
\end{equation}

\begin{proposition}  \label{prop3.1} Let the notation be as above. Then

(1) \quad $ \varphi^{ra} \in S(V^{ra})$ matches with $\frac{-2}{p-1}\varphi_0^{sp}+\frac{p+1}{p-1}\varphi_1^{sp} \in S(V^{sp})$.

(2) \quad $\varphi^{ra, \sharp} \in S(V^{ra})$ matches with $\frac{2p}{p-1}\varphi_0^{sp}-\frac{p+1}{p-1}\varphi_2^{sp} \in S(V^{sp})$.

\end{proposition}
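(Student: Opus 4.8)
The plan is to verify the matching condition directly, as an equality of standard sections in $I(1)$. Since $\dim V=4$ and both quaternion norm forms have trivial character, $s_0=\tfrac{m}{2}-1=1$, and both $\lambda^{ra}(\varphi^{ra})$ and $\lambda^{sp}(\tfrac{-2}{p-1}\varphi_0^{sp}+\tfrac{p+1}{p-1}\varphi_1^{sp})$ lie in $I(1)$. By Iwasawa decomposition a standard section is determined by its restriction to $K=\SL_2(\Z_p)$; and because $\chi_V$ is trivial, this restriction is left-invariant under $P\cap K$ and so descends to a function on the $(p+1)$-point set $(P\cap K)\backslash K\cong\mathbb{P}^1(\F_p)$. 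Taking as representatives the identity $e$ together with $w\,n(b)$ for $b$ running over lifts of $\F_p$ to $\Z_p$, it suffices to compare the two sides at these $p+1$ points.

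First I would record the evaluation formulas. At $e$ one has $\lambda(\varphi)(e)=\varphi(0)$. At $w\,n(b)$, the Weil representation formulas $\omega(n(b))\varphi(y)=\psi(b\det y)\varphi(y)$ and $\omega(w)\varphi(0)=\gamma(V)\int_V\varphi(y)\,dy$ give
\[
\lambda(\varphi)(w\,n(b))=\gamma(V)\int_V\psi(b\det y)\,\varphi(y)\,dy,
\]
an integral against the self-dual Haar measure over the support of $\varphi$. The only external input I will use is that the Weil indices of the two quaternion norm forms differ by the Hasse invariant, i.e. $\gamma(V^{ra})=-\gamma(V^{sp})$; this sign is exactly what the coefficients $\tfrac{-2}{p-1}$, $\tfrac{p+1}{p-1}$ are built to absorb.

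For part (1) all lattices involved ($L^{ra}$, $L_0^{sp}$, $L_1^{sp}$) are integral, so $\det y\in\Z_p$ and for integral $b$ the phase $\psi(b\det y)$ is trivial; the integral collapses to $\gamma(V)\vol(L)$ and is the same at every affine point. The comparison thus reduces to two scalar identities: at $e$, $\varphi^{ra}(0)=1=\tfrac{-2}{p-1}+\tfrac{p+1}{p-1}$; and on the affine points, inserting $\vol(L^{ra})=p^{-1}$, $\vol(L_0^{sp})=1$, $\vol(L_1^{sp})=p^{-1}$ turns the required equality into precisely $\gamma(V^{ra})=-\gamma(V^{sp})$. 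This finishes (1).

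For part (2) the dual lattices $L^{ra,\sharp}=\pi^{-1}L^{ra}$ and $L_1^{sp,\sharp}$ are no longer integral, so $\det y$ ranges in $\tfrac1p\Z_p$ and the phase genuinely oscillates when $b\in\Z_p^\times$. The values at $e$ and at $b=0$ are again pure volumes and match just as in (1). The crux is the $b\in\Z_p^\times$ case: on the split side a direct coordinate integration (integrating out the entry in $\tfrac1p\Z_p$ first) gives $\int_{L_1^{sp,\sharp}}\psi(b\det y)\,dy=1$, while on the ramified side the key identity is $\int_{L^{ra,\sharp}}\psi(b\det y)\,dy=-1$. This last integral is the main obstacle. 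I would reduce it by writing $L^{ra,\sharp}=\pi^{-1}L^{ra}$ and partitioning modulo $\pi L^{ra}$; since the reduced trace kills the cross terms, the phase depends only on the class in $L^{ra}/\pi L^{ra}\cong\F_{p^2}$, and the reduced norm becomes the field norm $\norm\colon\F_{p^2}\to\F_p$, so the integral is governed by the norm-character Gauss sum $\sum_{c\in\F_{p^2}}\psi_0(\norm(c))=-p$ (each nonzero value of $\norm$ hit $p+1$ times), which after volume bookkeeping yields $-1$. Granting these four values, substituting into $\tfrac{2p}{p-1}\varphi_0^{sp}-\tfrac{p+1}{p-1}\varphi_2^{sp}$ and using once more $\gamma(V^{ra})=-\gamma(V^{sp})$ matches all $p+1$ values and completes (2).
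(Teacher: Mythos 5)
Your evaluation formulas and all four integral computations are correct (including the Gauss-sum evaluation $\int_{L^{ra,\sharp}}\psi(b\det y)\,dy=-1$ for $b\in\Z_p^\times$, which is the right way to handle the ramified dual lattice), and the overall strategy --- compare the two standard sections pointwise on coset representatives, letting the rational coefficients absorb the sign $\gamma(V^{ra})=-\gamma(V^{sp})$ --- is the same as the paper's. But there is a genuine gap in the reduction step. Left-invariance under $P\cap K=N(\Z_p)M(\Z_p)$ makes the restriction of a standard section to $K=\SL_2(\Z_p)$ a function on $(P\cap K)\backslash K\cong\mathbb{P}^1(\Z_p)$, which is infinite, not on the $(p+1)$-point set $\mathbb{P}^1(\F_p)$; the latter is $K_0(p)\backslash K$, and you have not shown that your sections are right $K_0(p)$-invariant. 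Concretely, parametrizing cosets by the bottom row of $g$ up to $\Z_p^\times$-scaling, your test points $e$ and $w\,n(b)$, $b\in\Z_p$, cover the classes $(0,1)$ and $(1,b)$, but miss every class $(c,1)$ with $c\in p\Z_p\setminus\{0\}$, i.e.\ the elements $n_-(c)=w^{-1}n(-c)w$. Checking equality at your $p+1$ points therefore does not yet prove equality of the two sections.

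The missing piece is exactly what the paper's proof supplies before any values are computed: it verifies that $\varphi^{ra}$, $\varphi^{ra,\sharp}$ and the $\varphi_i^{sp}$ are fixed by $\omega(n_-(c))$ for $c\in p\Z_p$ (a short computation via $\omega(n_-(c))=\omega(w)^{-1}\omega(n(-c))\omega(w)$, using that $\det$ takes values in $\tfrac1p\Z_p$ on the relevant dual lattices, so the phase $\psi(-c\det y)$ is trivial there), whence all the sections lie in $I(1)^{K_0(p)}$, a $2$-dimensional space determined by the values at $1$ and $w$. Once that invariance is in place your computation closes the argument --- indeed it then does more than necessary, since a single $w\,n(b)$ would suffice. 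So the fix is routine with the tools you are already using, but as written the proof does not establish the matching.
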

\begin{proof} (1) \quad  Since
$$
\SL_2(\Z_p) = K_0(p) \cup  N(\Z_p) w K_0(p),   \quad   K_0(p) =L_1^{sp} \cap \SL_2(\Z_p),
$$
$I(1)^{K_0(p)}$ has dimension $2$, and $\Phi \in I(1)^{K_0(p)}$ is determined by $\Phi(1)$ and $\Phi(w)$.  Notice that $K_0(p)$ is generated by $n(b)$ and
$n_-(c) = w^{-1} n(-c) w$, $b \in \Z_p$ and $c \in p \Z_p$. Using this, one can check that $\varphi^{ra}$, $\varphi_i^{sp}$ are all $K_0(p)$ under respective Weil representation. We check $\omega(n_-(-c))\varphi^{ra} =\varphi^{ra}$ and leave others to the reader.  One has
$$
\omega^{ra}(w)\varphi^{ra}(x) =\gamma(V^{ra}) \varphi^{ra, \sharp}(x) \vol(L^{ra}).
$$
So
$$
\omega^{ra}(n(-c)w)\varphi^{ra}(x)=\gamma(V^{ra})\vol(L^{ra}) \psi_p(-c \det (x)) \varphi^{ra, \sharp}(x)=\gamma(V^{ra}) \varphi^{ra, \sharp}(x) \vol(L^{ra}),
$$
i.e.,
$$
\omega^{ra}(n(-c)w)\varphi^{ra}= \omega^{ra}(w)\varphi^{ra}.
$$
So
$$
\omega^{ra}(n_-(c)) \varphi^{ra} = \omega^{ra}( w^{-1}) \omega^{ra}(n(-c)w)\varphi^{ra} =\varphi^{ra}
$$
as claimed.

Now we have $\lambda^{ra}(\varphi^{ra}), \lambda^{sp}(\varphi_i^{sp} \in I(1)^{K_0(p)}$. Direct calculation gives
\begin{align*}
\lambda^{ra}(\varphi^{ra})(1) &= 1,  \quad \lambda^{ra}(\varphi^{ra})(w)= \gamma(V^{ra}) p^{-1}
\\
\lambda^{sp}(\varphi_0^{sp})  &= 1,  \quad  \lambda^{sp}(\varphi_0^{sp})(w)  = \gamma(V^{sp}),
\\
\lambda^{sp}(\varphi_1^{sp})  &= 1,  \quad  \lambda^{sp}(\varphi_0^{sp})(w)  = \gamma(V^{sp})p^{-1}.
\end{align*}
Since  $\gamma(V^{sp})=-\gamma(V^{ra}) (=1)$, one has
$$
\lambda^{ra}(\varphi^{ra})= \frac{-2}{p-1} \varphi_0^{sp}  + \frac{p+1}{p-1} \varphi_1^{sp}.
$$
This proves (1). Claim (2) is similar and is left to the reader. One just needs to replace $K_0(p)$ by
$$
K_0^+(p) = \{ \abcd \in \SL_2(\Z_p):\,  b \equiv 0 \pmod  p \}.
$$

\end{proof}

To find  matching  pairs of  coset functions $\varphi_\mu^{ra} =\cha (\mu +L^{ra})$, we first need to label them. Let $\kay$ be the unique unramified quadratic field extension of $\Q_p$ in $B^{ra}$, and let $\OO_\kay=\Z_p + \Z_p u$ be the ring of integers of $\kay$ with $u \in \OO_\kay^\times$. Then there is a uniformizer $\pi$ of $B$ such that $\pi r = \bar r \pi$ for $r \in \kay$ and  $\pi^2 =p$. One has then
$$
L^{ra} =\OO_{B^{ra}} = \OO_\kay + \OO_\kay \pi = \Z_p + \Z_p u + \Z_p \pi + \Z_p u p.
$$
So one has an isomorphism
$$
(Z/p)^2 \cong L^{ra, \sharp}/L^{ra},  \quad  (i, j) \mapsto \mu^{ra}_{i, j} = \frac{i+ j \mu}{\pi}.
$$
Using the identification, we denote $\varphi_{i,j}^{ra} $ for $\cha(\mu^{ra}_{i, j} + L^{ra})$.  Similarly, we use $\varphi_{i, j}^{sp}$ to denote
$\cha( \mu^{sp}_{i, j}  + L_1^{sp})$, where  $\mu^{sp}_{i, j} =\kzxz {0} {\frac{j}p} {i} {0}$.   Let
$$
K(p) =\{ \abcd \in \SL_2(\Z_p):\,  a-1 \equiv d-1 \equiv b \equiv c \equiv 0 \pmod p\}.
$$
Since
\begin{equation}\label{decomposition}
N(\Z_p) M(\Z_p) \backslash \SL_2(\Z_p)/K(p)  = \{1,  w n(j),  0 \le j \le p-1\},
\end{equation}
one has $\dim I(1)^{K(p)} = p+1$. 

\begin{lemma}\label{lem:basis} Let the notation be as above. Then 

 (1) \quad  One  has $\varphi_{i, j}^{sp}, \varphi_{i, j}^{ra} \in I(1)^{K(p)}$.

(1) \quad  When $ab \equiv cd \pmod p$ and $(a, b), (c, d) \ne (0, 0)$, one has $\lambda^{sp}(\varphi_{a, b}^{sp}) = \lambda^{sp}(\varphi_{c, d}^{sp})$.

(2) \quad  The set $\{ \lambda^{sp}(\varphi_0^{sp}),  \lambda^{sp}(\varphi_{1, j}^{sp}), 0 \le j \le p-1 \} $  gives a basis of $I(1)^{K(p)}$.

\end{lemma}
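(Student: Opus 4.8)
The plan is to translate all three assertions into statements about the $p+1$ values that a $K(p)$-fixed section takes at the double-coset representatives furnished by (\ref{decomposition}). A section $\Phi\in I(1)^{K(p)}$ is left $N(\Z_p)M(\Z_p)$-invariant and right $K(p)$-invariant, hence is determined by the numbers $\Phi(1)$ and $\Phi(wn(j))$ for $0\le j\le p-1$, and these may be prescribed freely; this is exactly why $\dim I(1)^{K(p)}=p+1$. For $\varphi\in S(V)$ one has $\lambda(\varphi)(1)=\varphi(0)$, and unwinding $\omega(wn(j))\varphi(0)=\omega(w)\big[\omega(n(j))\varphi\big](0)$ via (\ref{weilrep}) gives
$$\lambda(\varphi)(wn(j))=\gamma(V)\int_{V}\psi(jQ(x))\varphi(x)\,dx .$$
Thus each assertion becomes the evaluation of $\varphi(0)$ together with a Gauss-type integral of $\psi(jQ(\cdot))$ over a lattice coset.

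For the first assertion I would prove the stronger fact that each $\varphi^{sp}_{i,j}$ and $\varphi^{ra}_{i,j}$ is fixed by $\omega(k)$ for every $k\in K(p)$, so that applying $\lambda$ lands in $I(1)^{K(p)}$. Every element of $K(p)$ factors as $n(b)m(a)n_-(c)$ with $b,c\in p\Z_p$ and $a\in 1+p\Z_p$, so it suffices to treat these three types, exactly as in the proof of Proposition \ref{prop3.1}. Invariance under $n(b)$ uses that each coset lies in its dual lattice, where $Q\in\frac1p\Z_p$, so that $\psi(bQ)=1$; invariance under $m(a)$ uses $(a-1)\mu\in pL^{\sharp}\subseteq L$, whence multiplication by the unit $a$ fixes the coset; and invariance under $n_-(c)=w^{-1}n(-c)w$ follows by conjugating through $w$, since $\omega(w)\cha(\mu+L)$ is a constant multiple of a function supported on $L^{\sharp}$, on which $\psi(-cQ)=1$. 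The two lattice facts needed, $pL^{ra,\sharp}=\pi L^{ra}\subseteq L^{ra}$ and $pL_1^{sp,\sharp}\subseteq L_1^{sp}$, are immediate from the descriptions already recorded.

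For the second assertion the formulas above give $\lambda^{sp}(\varphi^{sp}_{a,b})(1)=\varphi^{sp}_{a,b}(0)=0$ whenever $(a,b)\not\equiv(0,0)$, so only the values at $wn(j)$ remain, and these are controlled by the fractional part of $\det$ along the coset $\mu^{sp}_{a,b}+L_1^{sp}$. Expanding $\det x$ for a general $x$ in this coset, every term lies in $\Z_p$ except $-ab/p$ (here the congruence $c\equiv 0\pmod p$ for the lower-left entry of $L_1^{sp}$ is what kills the remaining cross terms), so $\det x\equiv -ab/p\pmod{\Z_p}$ is constant on the coset. Hence
$$\lambda^{sp}(\varphi^{sp}_{a,b})(wn(j))=\gamma(V^{sp})\,\vol(L_1^{sp})\,\psi\!\left(-\tfrac{jab}{p}\right),$$
which depends on $(a,b)$ only through $ab\bmod p$. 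When $ab\equiv cd\pmod p$ the two images therefore agree at $1$ and at every $wn(j)$, and so are equal. I expect this collapse of the $\lambda$-images to be the crux of the lemma: the bookkeeping showing that the $\tfrac1p$-part of $\det x$ is exactly $-ab/p$ and is independent of the lattice point is the main obstacle, and it is precisely what forces the many coset functions into the $(p+1)$-dimensional space.

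For the third assertion I would form the $(p+1)\times(p+1)$ matrix of the values of $\lambda^{sp}(\varphi_0^{sp})$ and the $\lambda^{sp}(\varphi^{sp}_{1,j})$ at $1,wn(0),\dots,wn(p-1)$. Since $\varphi_0^{sp}=\cha(M_2(\Z_p))$ is $\SL_2(\Z_p)$-invariant, its row is $(1,\gamma(V^{sp}),\dots,\gamma(V^{sp}))$, while for $\varphi^{sp}_{1,j}$ the entry at $1$ is $0$ and the entry at $wn(j')$ is a fixed nonzero constant times $\psi(-jj'/p)$ by the second assertion (with $a=1$, $b=j$). Expanding along the first column reduces invertibility to that of the matrix $\big[\psi(-jj'/p)\big]_{0\le j,j'\le p-1}$, a Vandermonde whose nodes $\psi(-j/p)$ are the distinct $p$-th roots of unity, hence nonsingular. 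As there are $p+1$ such vectors and $\dim I(1)^{K(p)}=p+1$, they form a basis, completing the proof.
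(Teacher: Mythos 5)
Your proposal is correct and follows essentially the same route as the paper: reduce everything to the $p+1$ values at the representatives $1, wn(j)$ from (\ref{decomposition}), compute $\lambda^{sp}(\varphi^{sp}_{a,b})(wn(j))$ as a Gauss-type integral whose value $\tfrac1p e(abj/p)$ depends only on $ab \bmod p$, and conclude linear independence from the nonvanishing of the DFT determinant $\det\bigl(e(ij/p)\bigr)$. Your treatment of part (1) is in fact slightly more careful than the paper's (you handle the torus factor $m(a)$, $a\in 1+p\Z_p$, explicitly rather than relying on unipotent generation of $K(p)$), but the substance is identical.
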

\begin{proof} Claim (1) follows from definition and the fact that $K(p)$ is generated by $n(pb)$ and $n_-(pb)$, $b \in \Z_p$. The invariant under $n(pb)$ is clear.  The invariant under $n_-(pb)$ can be verified the same way as in the proof of Proposition \ref{prop3.1}.

(2) \quad  By  (\ref{decomposition}), it is only need to check the values at $\{1, w n(i),  0 \le i \le p-1\}$.
\begin{eqnarray}
\lambda^{sp}(\varphi^{sp}_{a, b})(w n(i))&=& \int_{{\mu_{a, b}}+\mathcal{O}_{p}} \psi_{p}(i\det(x))dx \nonumber\\
&=& \frac{1}{p}e(abi/p)\nonumber
\end{eqnarray}
and $$ \lambda^{sp}(\varphi^{sp}_{a, b})(1)=0,$$
where $e(x)=e^{2\pi \sqrt{-1} x}$.
The result follows.\\
(3) \quad
By (\ref{decomposition}), we
see  that $\Phi\in I(1)^{K(p)}$ is
determined by the values at
\begin{center}
$\{1, w n(i),  0 \le i \le p-1\}.$
\end{center}

Suppose
$$a\lambda^{sp}(\varphi_0^{sp})+ \sum_{0\leq j\leq p-1}a_{j}\lambda^{sp}(\varphi_{1, j}^{sp})=0,$$
where $a, a_{j}\in \mathbb{C}$.\\

Taking  the value at 1:
$$a\lambda^{sp}(\varphi_0^{sp})(1)+ \sum_{0\leq j\leq p-1}a_{j}\lambda^{sp}(\varphi_{1, j}^{sp})(1)=0,$$
we get a=0. Now we consider the equations

$$ \sum_{0\leq j\leq p-1}a_{j}\lambda^{sp}(\varphi_{1, j}^{sp})(wn(i))=0.$$
The coefficient matrix is $\frac{1}p \mathbf{A}:=\big[\lambda^{sp}(\varphi_{1, j}^{sp})(wn(i))]_{0\leq i,j\leq p-1}.$  Since
$$
\lambda^{sp}(\varphi_{1, j}^{sp})(wn(i))=\frac{1}{p}e(ij/p)
,$$ one sees  $\mathbf{A}=(e(ij/p))_{0\leq i,j\leq p-1}$ and
 $\det(\mathbf{A})= \det((e(ij/p)))\neq0$.\\

Hence $a=0, a_{j}=0, 0\leq j\leq p-1$,  and $ \lambda^{sp}(\varphi_0^{sp}),  \lambda^{sp}(\varphi_{1, j}^{sp}), 0 \le j \le p-1  $ are  linear independent.
\end{proof}

\begin{proposition} \label{prop3.3}  Assume $(k, l) \ne 0$. Let $\mathbf A=(e(\frac{ij}p))_{0 \le i, j\le p-1}$  be  the matrix in the proof of Lemma \ref{lem:basis},  and let $\mathbf A_j$  be the matrix  obtained by replacing $j$-th column of $\mathbf A$ by column $\{- e(\frac{ -i d_{k, l}}p), 0 \le i \le p-1\}$, where
 $$
 d_{k, l}= k^2 + kl \hbox{Tr}_{\kay/\Q_p} (u) + l^2 \norm_{\kay/\Q_p} (u).
 $$
 Then $\sum_{j=0}^{p-1} c_{k, l}(j) \varphi_{1, j}^{sp} \in S(V^{sp})$ is matching with $\varphi_{k, l}^{ra}$, where
$c_{k, l}(j)=\frac{det\mathbf{A}_{j}}{det{\mathbf{A}}}$.
\end{proposition}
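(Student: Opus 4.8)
The plan is to follow the template of Lemma~\ref{lem:basis}: evaluate $\lambda^{ra}(\varphi_{k,l}^{ra})$ on the same coset representatives and recover the coefficients by Cramer's rule. By Lemma~\ref{lem:basis}(1) both $\lambda^{ra}(\varphi_{k,l}^{ra})$ and any combination $\sum_{j}c_{k,l}(j)\lambda^{sp}(\varphi_{1,j}^{sp})$ lie in $I(1)^{K(p)}$, and by the decomposition (\ref{decomposition}) a section of this $(p+1)$-dimensional space is determined by its values at $1$ and at $wn(i)$, $0\le i\le p-1$. So the entire matching reduces to matching these $p+1$ numbers.

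First I would dispose of the value at $1$. Since $(k,l)\ne(0,0)$, the coset $\mu_{k,l}^{ra}+L^{ra}$ avoids $0$, so $\lambda^{ra}(\varphi_{k,l}^{ra})(1)=\varphi_{k,l}^{ra}(0)=0$; likewise $\lambda^{sp}(\varphi_{1,j}^{sp})(1)=0$ as computed in Lemma~\ref{lem:basis}. Thus the value at $1$ agrees automatically, no $\varphi_0^{sp}$ term is needed, and the target already lies in the $p$-dimensional subspace spanned by the $\lambda^{sp}(\varphi_{1,j}^{sp})$.

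The crux is the value at $wn(i)$. Using the Weil representation (\ref{weilrep}),
$$
\lambda^{ra}(\varphi_{k,l}^{ra})(wn(i))=\omega^{ra}(wn(i))\varphi_{k,l}^{ra}(0)=\gamma(V^{ra})\int_{\mu_{k,l}^{ra}+L^{ra}}\psi_p(i\det y)\,dy.
$$
Writing $y=\mu_{k,l}^{ra}+z$ with $z\in L^{ra}$ and expanding $\det(\mu+z)=\det\mu+(\mu,z)+\det z$, one has $\det z\in\Z_p$ and $(\mu_{k,l}^{ra},z)\in\Z_p$ because $\mu_{k,l}^{ra}\in L^{ra,\sharp}$, so $\psi_p(i\det z)=\psi_p\big(i(\mu_{k,l}^{ra},z)\big)=1$ and the integral collapses to $\psi_p(i\det\mu_{k,l}^{ra})\vol(L^{ra})$. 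Here $\det\mu_{k,l}^{ra}=\det(k+lu)/\det\pi$ with $\det(k+lu)=\norm_{\kay/\Q_p}(k+lu)=d_{k,l}$ and $\det\pi$ of valuation one, while $\vol(L^{ra})=p^{-1}$ (forced by $[L^{ra,\sharp}:L^{ra}]=p^2$) and $\gamma(V^{ra})=-1$ (as in Proposition~\ref{prop3.1}). Tracking the phase carefully gives
$$
\lambda^{ra}(\varphi_{k,l}^{ra})(wn(i))=-\tfrac1p\,e\!\left(-\tfrac{i\,d_{k,l}}{p}\right).
$$

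Combining this with $\lambda^{sp}(\varphi_{1,j}^{sp})(wn(i))=\tfrac1p e(ij/p)$ from Lemma~\ref{lem:basis}, the requirement $\sum_j c_{k,l}(j)\lambda^{sp}(\varphi_{1,j}^{sp})=\lambda^{ra}(\varphi_{k,l}^{ra})$ becomes, after clearing the factor $1/p$, the linear system $\mathbf A\mathbf c=\mathbf b$ with $\mathbf A=(e(ij/p))_{0\le i,j\le p-1}$ and right-hand side $\mathbf b=\big(-e(-i d_{k,l}/p)\big)_{0\le i\le p-1}$. Since $\det\mathbf A\ne0$ (already noted in Lemma~\ref{lem:basis}), Cramer's rule yields $c_{k,l}(j)=\det\mathbf A_j/\det\mathbf A$, which is precisely the claimed formula. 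I expect the one genuinely delicate step to be the phase computation in the previous paragraph: correctly identifying $\det\mu_{k,l}^{ra}$ as $d_{k,l}$ over a valuation-one element, and pinning down the sign in the exponent together with the Weil index $\gamma(V^{ra})$ and the self-dual volume $\vol(L^{ra})$. Everything downstream is the same Vandermonde/discrete-Fourier linear algebra already used to prove Lemma~\ref{lem:basis}.
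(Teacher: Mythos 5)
Your proposal is correct and follows essentially the same route as the paper: reduce to the $p+1$ values at $1$ and $wn(i)$ via $K(p)$-invariance, observe the value at $1$ vanishes on both sides so no $\varphi_0^{sp}$ component appears, compute $\lambda^{ra}(\varphi_{k,l}^{ra})(wn(i))=-\tfrac1p e(-id_{k,l}/p)$ against $\lambda^{sp}(\varphi_{1,j}^{sp})(wn(i))=\tfrac1p e(ij/p)$, and solve by Cramer's rule. You actually supply more detail than the paper on the one step it labels ``easy to check'' (the expansion $\det(\mu+z)=\det\mu+(\mu,z)+\det z$ and the collapse of the integral to $\psi_p(i\det\mu)\vol(L^{ra})$), which is a welcome addition.
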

\begin{proof}
Since  $\lambda^{ra}(\varphi_{k, l}^{ra}) \in I(1)^{K(p)}$,  it suffices to check the identity at $\{1, wn(i), 0\le i \le p-1\}$.

Suppose $(k, l)\neq0 $, let
\begin{equation}
\lambda^{ra}(\varphi^{ra}_{k, l})= b_{k, l}\lambda^{sp}(\varphi_{0}^{sp})+\sum_{0\leq j \leq p-1}c_{k, l}(j) \lambda^{sp} (\varphi^{sp}_{1,j}),\nonumber
\end{equation}
where $b_{k, l}, c_{k, l}(j) \in \mathbb{C}.$     Taking the value at $1$, one  gets  $b_{k, l}=0$.

Taking the value at $\{wn(i)$, $0\leq i\leq p-1\}$, one has
\begin{equation}\nonumber
\lambda^{ra}(\varphi^{ra}_{k, l})(wn(i))= \sum_{0\leq j \leq p-1}c_{k, l}(j) \lambda^{sp} (\varphi^{sp}_{1, j})(wn(i)).
\end{equation}
It is easy to check that $$\lambda^{ra}(\varphi^{ra}_{k, l})(wn(i))=-\frac{1}{p}e(-\frac{i}{p}(k^{2}+klTr(u)+l^{2}N(u))).$$ Denoting $$
 d_{k, l}= k^2 + kl \hbox{Tr}_{\kay/\Q_p} (u) + l^2 \norm_{\kay/\Q_p} (u),
 $$ then $\lambda^{ra}(\varphi^{ra}_{k, l})(wn(i))=-\frac{1}{p}e(-\frac{id_{k, l}}{p})$.
From the proof of Lemma \ref{lem:basis}, it is known that $\lambda^{sp} (\varphi^{sp}_{1, j})(wn(i))=\frac{1}{p}e(ij/p)$.
So we get $c_{k, l}(j)=\frac{det\mathbf{A}_{j}}{det{\mathbf{A}}}$.

\end{proof}

\subsection{The case $p=\infty$} In this subsection we  consider the case $\Q_p=\R$ and  recall a matching pair given in \cite{KuIntegral}.  Notice that $B^{ra}$ in this case is the Hamilton division algebra, and $V^{ra}$ has signature $(4, 0)$. Let $\varphi_\infty^{ra} (x) =e^{ - 2 \pi \det (x)} \in S(V^{ra})$, then $\varphi_\infty^{ra}$ is of weight $2$ in the sense
$$
\omega^{ra}(k_\theta) \varphi_\infty^{ra} = e^{2 i \theta}   \varphi_\infty^{ra}, \quad k_\theta = \kzxz {\cos\theta} {\sin\theta} {-\sin\theta} {\cos\theta}.
$$
On the other hand, Kudla constructed a family of  weight $2$ Schwartz function $\varphi_\infty^{sp} \in S(V^{sp})$  as follows \cite[Section 4.8]{KuIntegral}. Recall $V^{sp} = M_2(\R)$. Given an orthogonal decomposition
\begin{equation} \label{eq:spacedecomposition}
V^{sp} = V^+ \oplus V^-,  \quad x = x^+ + x^-,
\end{equation}
with $V^+$ of signature $(2,0)$ and $V^-$ of signature $(0, 2)$. One defines (Kudla used the notation $\tilde\phi(x, z)$)
$$
\varphi_\infty^{sp} (x, V^-) = (4\pi (x^+, x^+) -1) e^{ -\pi (x^+, x^+) + \pi (x^-, x^-)}.
$$
Kudla proved the following proposition \cite[Section 4.8]{KuIntegral}.

\begin{proposition} \label{prop3.4}   For any orthogonal decomposition as in  (\ref{eq:spacedecomposition}), $(\varphi_\infty^{ra}, \varphi_\infty^{sp}(x , V^-))$ is a matching pair, and their (same) image in $I(1)$ is the unique weight $2$ section $\Phi_\infty^2$ given by
$$
\Phi_\infty^2(n(b) m(a) k_\theta)= |a|^2 e^{2 i \theta}.
$$
\end{proposition}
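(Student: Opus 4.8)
The plan is to exploit the fact that the weight-$2$ subspace of $I(1)$ at the archimedean place is one-dimensional, so that it suffices to show that each of $\lambda^{ra}(\varphi_\infty^{ra})$ and $\lambda^{sp}(\varphi_\infty^{sp}(\cdot,V^-))$ is a weight-$2$ section with the correct normalization. Since $\chi_V$ is trivial, applying $\Phi\mapsto\Phi(g k_\pi)$ with $k_\pi=-I$ and using (\ref{eq:1.1}) shows that only even $\SO_2(\R)$-types occur in $I(s,1)_\infty$, each with multiplicity one; hence the weight-$2$ line is spanned by $\Phi_\infty^2$, and any weight-$2$ section $\Phi$ equals $\Phi(1)\,\Phi_\infty^2$. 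Because $\lambda_V(\varphi)(1)=\omega(1)\varphi(0)=\varphi(0)$, the whole proposition reduces to two assertions: that both Schwartz functions transform under $\omega(k_\theta)$ by $e^{2i\theta}$, and that their values at $0$ are normalized so as to produce the common section $\Phi_\infty^2$.

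For $\varphi_\infty^{ra}$ the weight-$2$ property is exactly the displayed identity $\omega^{ra}(k_\theta)\varphi_\infty^{ra}=e^{2i\theta}\varphi_\infty^{ra}$ recorded just before the proposition, so $\lambda^{ra}(\varphi_\infty^{ra})$ is automatically a weight-$2$ section, equal to $\varphi_\infty^{ra}(0)\,\Phi_\infty^2=\Phi_\infty^2$. To treat the split function I would compute the action of the generator $Z=\begin{pmatrix}0&1\\-1&0\end{pmatrix}$ of $\mathfrak{so}_2$ through the differentiated Weil representation: differentiating the formulas (\ref{weilrep}) gives $d\omega(Z)=2\pi i\,Q(x)-\frac{i}{4\pi}\Delta$, where $Q=\det$ is the (indefinite) form on $V^{sp}$ and $\Delta$ is the associated signature-$(2,2)$ Laplacian, the normalization being fixed by the check that $d\omega(Z)\varphi_\infty^{ra}=2i\,\varphi_\infty^{ra}$. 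Weight $2$ for $\varphi_\infty^{sp}$ is then the eigenvalue statement $d\omega(Z)\varphi_\infty^{sp}=2i\,\varphi_\infty^{sp}$, which integrates to $\omega^{sp}(k_\theta)\varphi_\infty^{sp}=e^{2i\theta}\varphi_\infty^{sp}$; combined with the value at $0$ this identifies $\lambda^{sp}(\varphi_\infty^{sp})$ with $\Phi_\infty^2$, and the matching $\lambda^{ra}(\varphi_\infty^{ra})=\lambda^{sp}(\varphi_\infty^{sp})=\Phi_\infty^2$ follows.

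The cleanest organization of the split computation, and the step I expect to be the main obstacle, is to use the orthogonal decomposition $V^{sp}=V^+\oplus V^-$ to factor $S(V^{sp})=S(V^+)\,\widehat{\otimes}\,S(V^-)$, on which $\SL_2(\R)$ acts diagonally so that weights add. On the positive part $V^+$ of signature $(2,0)$ the Gaussian $e^{-\pi(x^+,x^+)}$ has weight $1$, and the polynomial prefactor is one application of the weight-raising operator, taking it to a weight-$3$ vector; on the negative part $V^-$ of signature $(0,2)$ the anti-Gaussian $e^{\pi(x^-,x^-)}$ is the ground state of weight $-1$, so the total weight is $3+(-1)=2$. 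The delicate point is precisely the negative-definite factor: the signs entering $Q$, $\Delta$, and the self-dual measure on $V^-$ must be tracked carefully, since it is their interplay that yields the ground-state weight $-1$ (rather than $+1$), and it is the exact constant in the prefactor $4\pi(x^+,x^+)-\text{const}$ that guarantees the $V^+$ factor is a \emph{pure} weight-$3$ vector with no surviving lower-weight component. Once this eigenvalue identity is confirmed, the one-dimensionality of the weight-$2$ line together with the normalization at $g=1$ completes the proof.
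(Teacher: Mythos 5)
The paper offers no proof of this proposition---it is quoted from Kudla [Ku2, Section 4.8]---so the only question is whether your argument would actually establish it. Your skeleton is the right one: since $\chi_V$ is trivial, only even $\SO_2(\R)$-types occur in $I(1)_\infty$, each with multiplicity one, so the weight-$2$ line is spanned by $\Phi_\infty^2$, and because $\lambda_V(\varphi)(1)=\varphi(0)$ everything reduces to (a) a weight-$2$ eigenvector statement for each function and (b) equality of the two values at $x=0$. The trouble is that (a) and (b) are exactly the points you defer, and neither holds for the formulas as printed. For (b): $\varphi_\infty^{ra}(0)=1$ while $\varphi_\infty^{sp}(0,V^-)=(4\pi\cdot 0-1)e^{0}=-1$, so by your own reduction the two sections take the values $1$ and $-1$ at $g=1$ and cannot coincide; you assert that the value at $0$ "identifies $\lambda^{sp}(\varphi_\infty^{sp})$ with $\Phi_\infty^2$" without computing it. For (a): with the convention that $(x,y)$ is the bilinear form of $Q=\det$ (so $(x,x)=2Q(x)$, and $d\omega(k')=i(2\pi Q-\tfrac{1}{4\pi}\Delta)$ on a definite factor), the pure weight-$3$ vector on $V^+$ in the span of $(x^+,x^+)e^{-\pi(x^+,x^+)}$ and $e^{-\pi(x^+,x^+)}$ is $(2\pi(x^+,x^+)-1)e^{-\pi(x^+,x^+)}$; the printed prefactor gives $(4\pi(x^+,x^+)-1)e^{-\pi(x^+,x^+)}=2\cdot(\hbox{weight }3)+(\hbox{weight }1)$, so $\varphi_\infty^{sp}$ as written is a mixture of weights $2$ and $0$ and your one-dimensionality argument does not apply to it.

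Both failures have the same source: the transcription of Kudla's function uses a different normalization of $(\cdot,\cdot)$ and of the majorant than the one in force here. In the present conventions the function that genuinely matches $\varphi_\infty^{ra}$ is $(1-2\pi(x^+,x^+))e^{-\pi(x^+,x^+)+\pi(x^-,x^-)}$: it is the pure weight-$3$ vector on $V^+$ normalized to equal $1$ at the origin, tensored with the weight-$(-1)$ Gaussian on $V^-$, hence pure weight $2$ with value $1$ at $0$, and then your argument closes. So your strategy is sound, but the decisive eigenvalue computation is absent from the write-up, and carrying it out shows the proposition cannot be proved in the literal form stated without first correcting the normalization of $\varphi_\infty^{sp}$ (a correction that also bears on the claim $r_{D,N}'(0)=1$ in Section 5).
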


Because of this proposition, we will simply use  $\varphi_\infty^{sp}$ for $\varphi_\infty^{sp} (\, ,  V^-)$.

\subsection{Global matching} For a square-free  positive integer, and let $B(D)$ be the unique quaternion algebra over $\Q$ of discriminant $D$ as in the introduction. It is indefinite (i.e., $B(D)_\R \cong M_2(\R)$) if and only if $D$ has even number of prime factors and it is $M_2(\Q)$, i.e., $D=1$,  precisely when it represents $0$.
In this paper, we assume $D >1$ so the Siegel-Weil formula applies.  The following matching theorem is clear from Kudla's matching principle (\ref{eq:matching}) and Propositions \ref{prop3.1}, \ref{prop3.3}, and  \ref{prop3.4}.

\begin{proposition}  \label{prop3.5} Let $V(D_i)$ be the quadratic spaces associated to  the quaternion algebras $B(D_i)$ (with reduced norm as the quadratic form) with  square-free integers $D_i >1$, $i=1, 2$. Assume $\varphi^{(i)}=\prod_p \varphi_p^{(i)} \in S(V(D_i)(\A))$ satisfy the following conditions:

(1) \quad When $p =\infty$,   $\varphi_\infty^{(i)}$  is $\varphi_\infty^{sp}$ or $\varphi_\infty^{ra}$ depending on whether $B(D_i)_\infty$ is  split or non-split.

(2) \quad When $p \nmid D_1 D_2 \infty$ or $p | \hbox{gcd}(D_1, D_2)$, we identify $V(D_1)_p = V(D_2)_p$ and take any $\varphi_p^{(1)}= \varphi_p^{(2)} \in S(V(D_1)_p)$.

(3) \quad When $ p|\hbox{lcm}(D_1, D_2)$ but $p\nmid \hbox{gcd}(D_1, D_2)$, one of $B(D_i)$ is $B_p^{sp}$ and the one is $B_p^{ra}$, we take $(\varphi_p^{(1)}, \varphi_p^{(2)})$ to be a matching pair in Propositions \ref{prop3.1} and \ref{prop3.3}.

Then $(\varphi^{(1)}, \varphi^{(2)})$ is a matching pair, and
$$
I(g, \varphi^{(1)}) = I(g, \varphi^{(2)}), \quad g \in \SL_2(\A).
$$
\end{proposition}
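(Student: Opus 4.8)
The plan is to verify that the hypotheses (1)--(3) force $\varphi^{(1)}$ and $\varphi^{(2)}$ to match at every place, and then to feed this into Kudla's matching principle (\ref{eq:matching}). The first thing to observe is that the statement is purely a question of local matching: since each $V(D_i)$ is a quaternion norm space it has dimension $m=4$ and trivial quadratic character $\chi_{V(D_i)}=\chi$ (the Gram determinant of the norm form is a square), so $s_0=\tfrac{m}{2}-1=1$ and both intertwining maps $\lambda_{V(D_i)_p}$ land in the \emph{same} induced representation $I(1)=I(1,\chi)$ at every $p$. Thus the matching condition $\lambda_{V(D_1)_p}(\varphi_p^{(1)})=\lambda_{V(D_2)_p}(\varphi_p^{(2)})$ makes sense place by place, and the task is to check it under (1)--(3). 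Note also that $D_i>1$ makes $B(D_i)$ a division algebra, so $V(D_i)$ is anisotropic over $\Q$; hence the hypotheses of Theorem \ref{theo:Siegel-Weil} are satisfied for both spaces and (\ref{eq:matching}) is available.

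Next I would run through the places. At $p=\infty$, if $B(D_1)$ and $B(D_2)$ have the same type (both split, giving signature $(2,2)$, or both ramified, giving signature $(4,0)$), then by (1) the two archimedean functions are literally the same $\varphi_\infty^{sp}$ or $\varphi_\infty^{ra}$ on isomorphic spaces and matching is trivial; if the types differ, the matching is exactly Proposition \ref{prop3.4}. At a finite prime $p$ with $p\nmid \mathrm{lcm}(D_1,D_2)$ or $p\mid\gcd(D_1,D_2)$, the local quaternions $B(D_1)_p$ and $B(D_2)_p$ are of the same type, so $V(D_1)_p\cong V(D_2)_p$ as quadratic spaces; under the identification of (2) we take $\varphi_p^{(1)}=\varphi_p^{(2)}$ and matching is again immediate. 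Finally, at a prime $p\mid\mathrm{lcm}(D_1,D_2)$ with $p\nmid\gcd(D_1,D_2)$, exactly one of $D_1,D_2$ is divisible by $p$, so precisely one of $V(D_1)_p,V(D_2)_p$ is the ramified space $V^{ra}$ and the other is the split space $V^{sp}$; here (3) prescribes a matching pair taken from Propositions \ref{prop3.1} and \ref{prop3.3}, so $\lambda^{ra}(\varphi_p^{ra})=\lambda^{sp}(\varphi_p^{sp})$ holds by construction.

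Having matched at every place, I would conclude that $(\varphi^{(1)},\varphi^{(2)})$ is a matching pair, i.e.\ $\lambda_{V(D_1)}(\varphi^{(1)})=\lambda_{V(D_2)}(\varphi^{(2)})$ as sections in $I(1)$. Extending this common value to the standard section $\Phi(s)\in I(s,\chi)$ and invoking the Siegel--Weil formula (Theorem \ref{theo:Siegel-Weil}, with $\kappa=1$ since $m=4>2$), each theta integral equals the value at $s_0=1$ of the Eisenstein series $E(g,s,\Phi)$ attached to the \emph{same} section; hence the two Eisenstein series coincide and $I(g,\varphi^{(1)})=E(g,1,\Phi)=I(g,\varphi^{(2)})$, which is exactly (\ref{eq:matching}). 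There is no serious obstacle here: the entire content is the local bookkeeping of the previous subsections. The only points requiring genuine care are (a) confirming that at each \emph{differing} prime one local space is truly split and the other truly ramified, so that Propositions \ref{prop3.1} and \ref{prop3.3} apply as stated, and (b) checking the anisotropy of $V(D_i)$ (equivalently the assumption $D_i>1$) needed to legitimize the use of the matching principle.
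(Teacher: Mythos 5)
Your proposal is correct and follows exactly the route the paper takes: the paper's ``proof'' is the single sentence that the proposition is clear from Kudla's matching principle (\ref{eq:matching}) together with Propositions \ref{prop3.1}, \ref{prop3.3} and \ref{prop3.4}, which is precisely the place-by-place bookkeeping you carry out. Your version merely makes explicit the points the paper leaves implicit (triviality of $\chi_V$, anisotropy from $D_i>1$, and the case analysis at $\infty$), all of which check out.
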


In next two sections, we will give arithmetic and interpretations of the theta integrals in some special cases.

\section{Definite quaternions, representations numbers, and supersingular ellipti curves }  \label{sect:definite}

We first review a general fact about positive definite quadratic forms for the convenience of the reader.  Let $(V, Q)$ be a positive definite quadratic space
of even dimension $m$. Define
$$
\varphi_\infty(x) = e^{-2 \pi Q(x)}  \in S(V_\infty).
$$
Then  it has the properties
$$
\varphi_\infty(hx) =\varphi_\infty(x), \quad \omega(k_\theta) \varphi_\infty = e^{\frac{m}2 i \theta} \varphi_\infty
$$
for  $h \in O(V)(\R)$ and $k_\theta \in \SO_2(\R) \subset \SL_2(\R)$. For any $\varphi_f \in S(\hat V)$, where $\hat V = V \otimes_\Z \hat\Z$,  the theta kernel
$$
\theta(\tau, h, \varphi_f \varphi_\infty) = v^{-\frac{m}2} \theta(g_\tau, h, \varphi_f \varphi_\infty)
$$
is a holomorphic modular form of weight $\frac{m}2$ for some congruence subgroup, so is
$$
I(\tau, \varphi_f\varphi_\infty) = v^{-\frac{m}2} I(g_\tau, \varphi_f\varphi_\infty).
$$
Here $g_\tau = n(u) m(\sqrt v)$ for $\tau =u + i v \in \mathbb H$.

For an even integral lattice $L$ of $V$, we denote
\begin{equation} \label{eq:new4.1}
\theta(\tau, L) = \theta(\tau,  \cha(\hat L) \varphi_\infty), \quad I(\tau, L)=I(\tau,  \cha(\hat L) \varphi_\infty).
\end{equation}
Recall that two lattices $L_1$ and $L_2$  of $V$ are equivalent if there is $h \in O(V)$ such that $hL_1 =L_2$. Two lattices $L_1$ and $L_2$  are in the same genus if they are equivalent locally everywhere, i.e,  there is $h \in O(V) (\hat\Q)$ such that $h\hat L_1= \hat L_2$.  We also recall that $O(V)(\A)$ acts on the set of lattices as follows: $h L =  (h_f \hat L)\cap V$ where $h_f$ is the finite part of $h=h_f h_\infty$.  Let $\gen(L)$ be the genus of $L$---the set of equivalence classes of lattices in the same genus of $L$. Then the above discussion implies that
$$
O(V)(\Q) \backslash O(V)(\A)/K(L) O(V)(\R) \cong  \gen(L), \quad [h] \mapsto hL,
$$
where $K(L)$ is the stabilizer of $\hat L$ in $O(V)(\hat\Q)$.

\begin{proposition} Let
$$
r_L(n) =|\{ x \in  L:\,  Q(x) =n\}|, \quad  r_{\gen(L)}(n) = \left(\sum_{L' \in \gen(L)} \frac{1}{|O(L')|}\right)^{-1} \sum_{L' \in \gen(L)}  \frac{r_{L'}(n)}{|O(L')|},
$$
where $O(L)$ is the stabilizer of $L$ in $O(V)$. Then ($q =e(\tau)$)
\begin{align*}
\theta(\tau, h,  L) &=\sum_{n=0}^\infty r_{hL}(n) q^n,
\\
I(\tau, L) &= \sum_{m=0}^\infty r_{\gen(L)}(n) q^n.
\end{align*}
\end{proposition}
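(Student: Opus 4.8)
The plan is to reduce the adelic theta kernel $\theta(\tau,h,L)$ to the classical theta series of the shifted lattice $hL$, and then to deduce the averaged identity by integrating over $[O(V)]$ and decomposing along the $O(V)(\Q)$-orbits in the genus. First I would evaluate the archimedean Weil action on the Gaussian $\varphi_\infty$. Writing $g_\tau=n(u)m(\sqrt v)$ and applying (\ref{weilrep}) at $p=\infty$, the element $m(\sqrt v)$ contributes $\chi_{V,\infty}(\sqrt v)\,v^{m/4}\varphi_\infty(\sqrt v\,\cdot)$ with $\chi_{V,\infty}(\sqrt v)=1$ since $\sqrt v>0$, and then $n(u)$ multiplies by $\psi_\infty(uQ(x))=e^{2\pi i uQ(x)}$, so that
$$
\omega_\infty(g_\tau)\varphi_\infty(x)=v^{m/4}e^{2\pi i\tau Q(x)}=v^{m/4}q^{Q(x)},\qquad q=e(\tau).
$$
The normalizing power of $v$ in (\ref{eq:new4.1}) is chosen precisely to cancel this factor $v^{m/4}$, leaving a genuinely holomorphic object.

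Next I would assemble the kernel. Factor $h=h_fh_\infty\in O(V)(\A)$ and $\varphi=\cha(\hat L)\varphi_\infty$. Since $h_\infty\in O(V)(\R)$ preserves $Q$, one has $Q(h_\infty^{-1}x)=Q(x)$, so the infinite place contributes $v^{m/4}q^{Q(x)}$, while the finite place contributes $\cha(\hat L)(h_f^{-1}x)$, which equals $1$ exactly when $x\in h_f\hat L$. For $x\in V(\Q)$ this says $x\in(h_f\hat L)\cap V=hL$. Hence, after the normalization (\ref{eq:new4.1}),
$$
\theta(\tau,h,L)=\sum_{x\in hL}q^{Q(x)}=\sum_{n=0}^\infty r_{hL}(n)\,q^n,
$$
which is the first identity; here $Q(x)\in\Z_{\ge0}$ because $hL$ is even integral, so this is a genuine $q$-expansion.

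For the second identity I would integrate the first over $[O(V)]$. Since for each $n$ the set $\{x\in hL:Q(x)=n\}$ is finite and $O(V)(\R)$ is compact, summation and integration interchange, giving
$$
I(\tau,L)=\frac{1}{\vol([O(V)])}\int_{[O(V)]}\theta(\tau,h,L)\,dh=\sum_{n=0}^\infty\left(\frac{1}{\vol([O(V)])}\int_{[O(V)]}r_{hL}(n)\,dh\right)q^n.
$$
The integrand $h\mapsto r_{hL}(n)$ depends only on the lattice $hL$, hence is left $O(V)(\Q)$-invariant and right $K(L)O(V)(\R)$-invariant, so it is constant on each of the finitely many double cosets which, by the bijection stated just before the proposition, index $\gen(L)$. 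Taking representatives $h_1,\dots,h_t$ with $L_j=h_jL$, each orbit has volume $\vol(K(L)O(V)(\R))/|O(L_j)|$, where the finite stabilizer $O(V)(\Q)\cap h_jK(L)O(V)(\R)h_j^{-1}$ is exactly the automorphism group $O(L_j)$. Summing gives
$$
\int_{[O(V)]}r_{hL}(n)\,dh=\vol(K(L)O(V)(\R))\sum_{j=1}^t\frac{r_{L_j}(n)}{|O(L_j)|},
$$
while integrating the constant function $1$ gives $\vol([O(V)])=\vol(K(L)O(V)(\R))\sum_{j=1}^t 1/|O(L_j)|$; dividing yields exactly $r_{\gen(L)}(n)$ as defined.

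The only delicate point is the measure bookkeeping in the last step: one must verify that the arithmetic stabilizer $\Gamma_j=O(V)(\Q)\cap h_jK(L)O(V)(\R)h_j^{-1}$ coincides with $O(L_j)$, and that the common compact volume $\vol(K(L)O(V)(\R))$ factors out of both numerator and denominator, so that the result is independent of the choice of Haar measure and of the representatives $h_j$. Everything else is a direct unwinding of the Weil-representation formulas.
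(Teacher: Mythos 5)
Your proof is correct and follows essentially the same route as the paper's: the first identity is a direct unwinding of the Weil representation at the archimedean place, and the second comes from decomposing $[O(V)]$ into the double cosets $O(V)(\Q)h_jK(L)O(V)(\R)$ that index $\gen(L)$, each contributing volume $\vol(K(L)O(V)(\R))/|O(h_jL)|$, with the same factor cancelling in the total volume. The only cosmetic differences are that you integrate the coefficient functions $h\mapsto r_{hL}(n)$ term by term where the paper integrates the theta functions $\theta(\tau,h_j,L)$ over the double cosets, and that you make the archimedean computation explicit, correctly obtaining the factor $v^{m/4}$ (so the normalization $v^{-m/2}$ displayed in (\ref{eq:new4.1}) should read $v^{-m/4}$).
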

\begin{proof} (sketch) This is well-known and we sketch the main steps for the convenience of the reader. The formula for $\theta$  follows directly from the definition. For theta integral, notice that $\cha(\hat L)$ is $K(L)$-invariant. Write
$$
O(V)(\A) = \cup_{j=1}^r O(V)(\Q) h_j K(L) O(V)(\R).
$$
Then $\gen(L) =\{ h_j L:\, j=1, \cdots, r\}$, and
\begin{align*}
\vol([O(V)]) I(\tau, L) &= \sum_j \theta(\tau, h_j, L) \int_{ (h_j^{-1} O(V)(\Q) h_j)\cap K(L) \backslash K(L)O(V)(\R)} 1 dh
\\
 &= \vol(K(L) O(V)(\R)) \sum_j  \frac{\theta(\tau, h_j, L)}{|O(h_jL)|}
 \\
 &= \vol(K(L) O(V)(\R))  \sum_{n=0}^\infty (\sum_{L' \in \gen(L)} \frac{r_{L'}(m)}{|O(L')|}) q^m.
\end{align*}
On the other hand, the same calculation gives
$$
\vol([O(V)]) = \vol(K(L) O(V)(\R)) \sum_{L' \in \gen(L)} \frac{1}{|O(L')|}.
$$
This proves the formula for $I(\tau, L)$.
\end{proof}

{\bf Proof of Theorem  \ref{theo1.1}}:  Let $V(D)$ be the quadratic space associated to the   quaternion algebra $B(D)$ of discriminant $D$. Recall that a Eicher order of conductor $N$, denoted by $\OO_D(N)$,  is an order $O$ of $B(D)$ such that
\begin{enumerate}
\item  When $p|D$, $O_p:=O \otimes_\Z \Z_p$ is the maximal order of $B(D)_p=B_p^{ra}$.

\item When $p\nmid D\infty$,   there is an identification $B(D)_p \cong M_2(\Q_p)$ under which $\OO_D(N)_p =R_p(N)$. Here
$$
R_p(N)= \{ \abcd \in M_2(\Z_p): \,  c \equiv 0 \mod N\}.
$$
\end{enumerate}
Now let $V^{(1)}=V(Dp)$ and $V^{(2)}=V(Dq)$ with $D$ satisfying the condition in the theorem. Then $V^{(i)}$ are both positive definite. Define $\varphi^{(1)} =\prod_l \varphi_l^{(1)} \in S(V^{(1)}(\A))$ as follows.
$$
\varphi_l^{(1)} =\begin{cases}
  \varphi_\infty^{ra} &\ff l =\infty,
  \\
 \cha( R_l(N))  &\ff   l \nmid Dq,
 \\
 \varphi_{l}^{ra}     &\ff l | Dp,
 \\
 \frac{-2}{q-1} \varphi_{q,0}^{sp} + \frac{q+1}{q-1} \varphi_{q, 1}^{sp}   &\ff  l=q
 \end{cases}
$$
where $\varphi_{l, i}^{sp}$ and $\varphi_l^{ra}$ are the functions defined in  (\ref{eq:varphi}) with added subscript $l$ (to indicate its independence). Then one has
$$
\varphi_f^{(1)} = \frac{-2}{q-1} \cha(\hat{\OO}_{Dp}(N)) + \frac{q+1}{q-1}\cha(\hat{\OO}_{Dp}(Nq)).
$$
So
$$
I(\tau, \varphi^{(1)}) = \frac{-2}{q-1} I(\tau, \OO_{Dp}(N)) + \frac{q+1}{q-1}I(\tau, \OO_{Dp}(Nq)).
$$
 One defines  $\varphi^{(2)}$ the same way with  the roles of $p$ and $q$ switched. Then $\varphi^{(1)}$ and $\varphi^{(2)}$ form a matching pair by Proposition \ref{prop3.3}. So Proposition \ref{prop3.5} implies
 $$
 I(\tau, \varphi^{(1)}) =I(\tau, \varphi^{(2)})
 $$
and thus
$$
\frac{-2}{q-1} I(\tau, \OO_{Dp}(N)) + \frac{q+1}{q-1}I(\tau, \OO_{Dp}(Nq)) = \frac{-2}{p-1} I(\tau, \OO_{Dq}(N)) + \frac{p+1}{p-1}I(\tau, \OO_{Dq}(Np)).
$$
Taking  $m$-th Fourier coefficients, one proves the theorem.

The case $D=1$ has special geometric meaning as indicated in the introduction.  Let $Y_0(N)$ be the moduli stack of pairs $( E, C)$ where $E$ is an elliptic curve and $C$ is a cyclic sub-scheme of order $N$. It is regular and flat over $\Z$ and smooth over $\Z[\frac{1}N]$.  For a prime  $p\nmid N$, let $SS_p(N)$ be the supersingular locus of $Y_0(N)(\bar{\mathbb F}_p)$---the $\bar{\mathbb F}_p$-point $( E, C)$ such that  $E$ is  supersingular, i.e, $\End(E)$ and $\End(E')$ are maximal orders of $B(p)$. In this case, the endomorphism ring $\End(E, C)$  is an Eichler order $\OO_p(N)$ of conductor $N$. Every Eichler of $B(p)$ comes this way. For two points $(E_1, C_1), (E_2, C_2)  \in SS_p(N)$,
$
\Hom((E_1, C_1), (E_2, C_2))
$,
 which consists of isogenies  $(f: E_1 \rightarrow E_2)$ with $ f(C_1) \subset C_2 $, is an quadratic lattice with respect to $\deg f $, and is in the same genus of $\End(x_1)$ and $\End(x_2)$. One can  actually prove (see example \cite{YaGeneral}) that all $\Hom(x_1, x_2)$ form the genus of $L=\OO_p(N)$ as $x_1$ and $x_2$ runs through the supersingular locus $SS_p(N)$.   So we have

 \begin{proposition} One has
 $$
 r_{p, N}(m) =\left(\sum_{x_1, x_2 \in SS_p(N)}\frac{1}{|\Aut(x_1)| |\Aut(x_2)|}\right)^{-1}  \sum_{x_1, x_2 \in SS_p(N)}\frac{ r_{\Hom(x_1, x_2)} (m)}{|\Aut(x_1)| |\Aut(x_2)|}.
 $$
 \end{proposition}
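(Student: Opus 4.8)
The plan is to convert the right-hand side, a weighted double sum over the supersingular locus, into the genus average $r_{\gen(L)}(m)$ for the Eichler order $L := \OO_p(N)$, which by the normalization recalled above is exactly $r_{p, N}(m)$. First I would set up the Deuring dictionary at level $N$. Fix a base point $x_0 = (E_0, C_0) \in SS_p(N)$ with $\End(x_0) = R := \OO_p(N)$. For any $x = (E, C) \in SS_p(N)$ the module $I_x := \Hom(x_0, x)$ is a locally free right $R$-ideal, and Deuring's correspondence (in its $\Gamma_0(N)$-level form) gives a bijection between isomorphism classes in $SS_p(N)$ and the right ideal classes of $R$, under which $\End(x)$ is the left order $\OO_\ell(I_x)$ and hence $\Aut(x) = \OO_\ell(I_x)^\times$.

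Next I would record the composition law: for $x_1, x_2 \in SS_p(N)$ one has $\Hom(x_1, x_2) = I_{x_2} I_{x_1}^{-1}$ as lattices in $B(p)$ (the inclusion $\Hom(x_1,x_2)\, I_{x_1}\subseteq I_{x_2}$ is immediate from composing isogenies, and equality follows from local principality), with reduced norm equal to the degree. Thus $\Hom(x_1, x_2)$ is an Eichler lattice of conductor $N$, and by the fact recalled just before the Proposition (that the $\Hom(x_1, x_2)$ exhaust $\gen(L)$) the assignment $(x_1, x_2) \mapsto \Hom(x_1, x_2)$ is a surjection onto $\gen(L)$.

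The heart of the argument is an orbit-counting step. Using the exceptional isomorphism $\SO(V) \cong (B(p)^\times \times B(p)^\times)/\G_m$ for $V = (B(p), \det)$, the action $(a,b) : x \mapsto a x b^{-1}$ realizes the special orthogonal group; adelically $(x_1, x_2) \leftrightarrow (g_1, g_2)$ and $\Hom(x_1, x_2)$ corresponds to the lattice $g_2 \hat R g_1^{-1}$. I would then show that the weighted fiber of $(x_1, x_2) \mapsto [\Hom(x_1, x_2)]$ over each isometry class $[L'] \in \gen(L)$ is uniform, namely
$$
\sum_{\substack{(x_1, x_2)\,:\\ \Hom(x_1, x_2) \cong L'}} \frac{1}{|\Aut(x_1)|\,|\Aut(x_2)|} = \frac{C}{|O(L')|}
$$
for a constant $C$ independent of $[L']$; this is precisely the mass formula for the $\SO(V)(\A_f)$-action on the genus, the finite stabilizers being the groups $O(L')$. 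Granting this, since $r_{L'}(m)$ depends only on the isometry class $[L']$, multiplying by $r_{L'}(m)$ and summing over $\gen(L)$ shows the numerator of the right-hand side equals $C \sum_{L'} r_{L'}(m)/|O(L')|$, while summing the displayed identity over all classes shows the denominator $\big(\sum_x 1/|\Aut(x)|\big)^2$ equals $C \sum_{L'} 1/|O(L')|$. The two factors $C$ cancel, and the ratio is $r_{\gen(L)}(m) = r_{p, N}(m)$.

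The main obstacle is the stabilizer arithmetic encoded in $C$: relating $|\Aut(x_1)|\,|\Aut(x_2)| = |R_1^\times|\,|R_2^\times|$ to $|O(L')|$ requires controlling (i) the index-two passage from $\SO(V)$ to $O(V)$ given by the main involution $x \mapsto \bar x$, an improper isometry preserving $\det$, and (ii) the contribution of the local normalizers $N(R_\ell)/(\Q_\ell^\times R_\ell^\times)$ at $\ell \mid pN$, equivalently the two-sided (Atkin--Lehner) ideal classes, which enlarge the stabilizer $K(L)$ beyond the image of $\hat R^\times \times \hat R^\times$. The point to verify carefully is that these ambiguities contribute the \emph{same} global factor to the fiber over every class, so that they are absorbed into the single constant $C$ and drop out of the normalized average.
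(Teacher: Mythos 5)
Your proposal is correct in outline and is, in substance, the proof the paper is gesturing at but does not actually write down: the paper's entire justification is the sentence that ``all $\Hom(x_1,x_2)$ form the genus of $L=\OO_p(N)$'' (cited to [Ya2]) followed by ``So we have,'' and you are right that surjectivity onto $\gen(L)$ alone does not give the identity --- the real content is your uniform-fiber statement, i.e.\ that the weighted multiplicity of each class $[L']$ among the $\Hom(x_1,x_2)$ is proportional to $1/|O(L')|$, with the constant then eliminated by comparing total masses (equivalently the $m=0$ Fourier coefficients, which is exactly how the paper's unnumbered Proposition in Section 4 normalizes $I(\tau,L)$). Two points to tighten. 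First, the quadratic form on $\Hom(x_1,x_2)\cong I_{x_2}I_{x_1}^{-1}$ is not the plain reduced norm but the reduced norm rescaled by $\norm(I_{x_1})/\norm(I_{x_2})$; without this rescaling the lattice $I_{x_2}I_{x_1}^{-1}$ is only in the genus of $L$ up to similitude, so the rescaling must be built into the identification before invoking $r_{L'}(m)$. Second, your closing worry about the constant $C$ is well placed but resolves cleanly: since $C$ is pinned down globally by summing the fiber identity over the whole genus, you never need to evaluate the local normalizer indices at $\ell\mid pN$ or decide which classes admit improper self-isometries --- you only need that the fibers of the double-coset map induced by the surjection $\GSpin(V)(\hat\Q)\to\SO(V)(\hat\Q)$ (kernel the central $\mathbb G_m$, together with the main involution $x\mapsto\bar x$, which stabilizes $\hat L$ at every place) carry constant weighted volume, which is the standard Tamagawa-measure argument. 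This bookkeeping is exactly where unit conventions bite --- note the factor $2$ and the square in the Gross--Keating formula quoted in the introduction versus its absence in the Proposition --- so writing out the $\Orth$ versus $\SO$ comparison explicitly, as you propose, is worth doing rather than eliding.
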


\section{Indefinite quaternions and Shimura curves} \label{sect:Shimura}

Associated to a square-free integer $D>0$ with even number of prime factors, is an indefinite quaternion algebra $B=B(D)$ of discriminant $D$. In particular, We choose and fix an embedding  $i: B  \hookrightarrow B_\infty \cong M_2(\R)$ such that the inner isomorphism $X \mapsto w X w^{-1}$ preserves $i(B)$.   We denote $\det$ for the reduced norm on $B$, then $V= V(D) =(B, \det)$ is of signature $(2, 2)$ and is anisotropic when $D>1$. According to \cite[Theorem 4.23]{KuIntegral},  the theta integral $I(g, \varphi)$ in  Proposition  \ref{prop3.5}  is a generating function of degrees of  some devisors with respect to the tautological line bundle over the Shimura variety associated to $V$. In our case, the line bundle can be identified with the line bundle of two variable modular forms of weight $1$, and the devisors can be identified with Hecke correspondences on a Shimura curve as we will see now.

 The action of  $B^\times \times B^\times$  on $V$ via
$$
(g_1, g_2)X = g_1 X g_2^{-1}
$$
gives  an identification of  $\Gspin(V)$ with
$$
H=\{ (g_1, g_2) \in  B^\times \times B^\times:\,  \det g_1 = \det g_2 \}.
$$
The associated spin norm is $\mu(g_1, g_2) =\det g_1$.
It has the exact sequence
$$
1 \rightarrow \mathbb G_m \rightarrow H \rightarrow \SO(V) \rightarrow 1.
$$
Let  $\mathbb D$ be the Hermitian domain of oriented negative $2$-planes in $V_\R$, and
$$
\mathcal L = \{ w \in V_\C=M_2(\C):\,  (w, w) =0,  (w, \bar w) <0\}
$$
on both of which $H(\R)$ acts naturally.
The  map
$$
f: \mathcal L/\C^\times \cong  \mathbb D,  \quad w= u+ i v \mapsto  \R (-u) + \R v
$$
gives an $H(\R)$-equivariant isomorphism between $\mathcal L/\C^\times$ with $\mathbb D$. Thus $\mathcal L$ is a  (tautological) line bundle over $\mathbb D$. The Hermitian domain has also a tube representation which we will need.  Indeed, the map
$$
\mathbf w:  (\mathbb H)^2  \times (\mathbb H^-)^2 \rightarrow \mathcal L,  \quad \mathbf w(z_1, z_2) = \kzxz {z_1 z_2} {z_1} {z_2} {1},
$$
gives an isomorphism
$$
 (\mathbb H)^2  \times (\mathbb H^-)^2 \cong  \mathcal L/\C^\times  \cong  \mathbb D.
$$
We will identify $\mathbb D$ with $(\mathbb H)^2  \times (\mathbb H^-)^2$ via this isomorphism.  The natural action of $B^\times \times B^\times $  on $V$ induces the following action on $(\mathbb H)^2  \times (\mathbb H^-)^2$:
\begin{equation}
(g_1, g_2)(z_1, z_2) = (i(g_1)z_1, i(g_2)^* z_2)
\end{equation}
where $g^* = {}^tg^{-1}$ for $g \in \GL_2(\R)$.
 One also has
\begin{equation}
(g_1, g_2)w(z_1, z_2) = w( (g_1, g_2)(z_1, z_2)) (c_1 z_1+d_1) (c_2 z_2 +d_2)
\end{equation}
for
$$
g_1 = \kzxz {a_1} {b_1} {c_1} {d_1} \in H(\R) , \quad  g_2^*= \kzxz {a_2} {b_2} {c_2} {d_2} \in H(\R)
$$

Associated to  a compact open subgroup $K$ of $H(\hat Q)$ is a Shimura variety $X_K$ over $\Q$ such that
$$
X_K(\C) = H(\Q) \backslash \mathbb D \times H(\hat\Q)/K.
$$
Moreover, $\mathcal L$ descends to a line bundle on $X_K$, which we continue to denote by $\mathcal L$. It can be identified with the line bundle of two variable modular forms of weight $(1, 1)$.   In this section, we always assume
$$
K = \{ (k_1, k_2) \in \hat{\OO}_D(N)^\times \times  \hat{\OO}_D(N)^\times:\,   \det k_1= \det k_2 \} \subset H(\hat\Q)
$$
which preserves the lattice $L=\OO_D(N)$.
\begin{comment} We also denote
$$
\Gamma = K \cap H(\Q) =\{ (\gamma_1, \gamma_2) \in \OO_D(N)^\times \times \OO_D(N)^\times:\,  \det \gamma_1 =\gamma_2 =\pm 1\}
$$
\end{comment}
\begin{lemma} \label{lem5.1} Let the notation be as above. Then one has an isomorphism
$$
 X_0^D(N) \times X_0^D(N) \cong X_K,  ([z_1], [z_2]) \mapsto [z_1, wz_2]
$$
where $X_0^D(N) = \Gamma_0^D(N)\backslash \mathbb H$ is the Shimura curves  defined in the introduction (recall $w=\kzxz {0} {1} {-1} {0}$).
\end{lemma}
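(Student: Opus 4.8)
The plan is to pass from the adelic double-coset description of $X_K$ to a classical uniformization, and then use a $w$-conjugation to convert the $*$-twisted action on the second factor of the tube domain into an ordinary action of $\Gamma_0^D(N)$. First I would show that $X_K$ is connected, with $X_K(\C)\cong\Gamma\backslash\D$ where $\Gamma=H(\Q)^+\cap K$ and $H(\Q)^+=\{(g_1,g_2)\in H(\Q):\det g_1=\det g_2>0\}$ is the subgroup preserving the component $\D\cong\H\times\H^-$ (since $\det g_i>0$ preserves $\H$ and, as $g^*$ has determinant $\det(g)^{-1}$, also preserves $\H^-$). By the standard reduction this amounts to the single-double-coset statement $H(\hat\Q)=H(\Q)^+K$, which I would prove via the spin norm $\mu(g_1,g_2)=\det g_1$ in two steps. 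Using $\mu(K)=\det(\hat{\OO}_D(N)^\times)=\hat\Z^\times$, the decomposition $\hat\Q^\times=\Q_{>0}^\times\cdot\hat\Z^\times$ (class number one of $\Q$), and the Hasse--Schilling surjectivity of $\det\colon B^\times(\Q)\to\Q^\times$ for the indefinite $B=B(D)$, any $g\in H(\hat\Q)$ can be moved by $H(\Q)^+$ on the left and $K$ on the right so that $\mu(g)=1$, i.e.\ $g\in(B^1\times B^1)(\hat\Q)$. Then strong approximation for $B^1$ (valid because $B^1_\infty=\SL_2(\R)$ is noncompact) gives $B^1(\hat\Q)=B^1(\Q)\cdot\hat{\OO}_D(N)^1$, with $\hat{\OO}_D(N)^1=\hat{\OO}_D(N)^\times\cap B^1(\hat\Q)$, so that $g\in(B^1(\Q))^2\cdot(\hat{\OO}_D(N)^1)^2\subset H(\Q)^+K$.

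Next I would identify $\Gamma$ explicitly. An element of $K\cap H(\Q)$ is a pair $(\gamma_1,\gamma_2)$ with $\gamma_i\in\OO_D(N)^\times$ and $\det\gamma_1=\det\gamma_2$; since a unit of the order has reduced norm $\pm1$, the positivity built into $H(\Q)^+$ forces $\det\gamma_i=1$, i.e.\ $\gamma_i\in\OO_D(N)^1=\Gamma_0^D(N)$, whence $\Gamma=\Gamma_0^D(N)\times\Gamma_0^D(N)$. It then remains to compare the $\Gamma$-action $(\gamma_1,\gamma_2)(z_1,z_2)=(i(\gamma_1)z_1,\,i(\gamma_2)^*z_2)$ on $\D\cong\H\times\H^-$ with the standard action of $\Gamma_0^D(N)\times\Gamma_0^D(N)$ on $\H\times\H$. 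The key computation is $wgw^{-1}=\det(g)\,g^*$ for $g\in\GL_2$, so $g^*$ and $wgw^{-1}$ agree as M\"obius transformations; together with $w^2=-1$ acting trivially, this shows that the substitution $z_2\mapsto wz_2$ intertwines $i(\gamma_2)^*$ with $i(\gamma_2)$. Thus the map $(z_1,z_2)\mapsto(z_1,wz_2)$ carries the ordinary action $(i(\gamma_1)z_1,i(\gamma_2)z_2)$ to the $\D$-action above, matching the stated assignment $([z_1],[z_2])\mapsto[z_1,wz_2]$; a direct check of imaginary parts confirms that $w$ sends the relevant half-plane into $\H^-$, so this is a well-defined $\Gamma$-equivariant biholomorphism $\H\times\H\to\D$.

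Passing to quotients then yields
$$
X_0^D(N)\times X_0^D(N)=(\Gamma_0^D(N)\times\Gamma_0^D(N))\backslash(\H\times\H)\;\xrightarrow{\ \sim\ }\;\Gamma\backslash\D\cong X_K ,
$$
and I would conclude by checking that the composite is exactly $([z_1],[z_2])\mapsto[z_1,wz_2]$ and is independent of the chosen representatives. I expect the main obstacle to be the connectedness/single-double-coset step: correctly assembling strong approximation for $B^1$, the reduced-norm surjectivity, and the class-number-one input, while tracking the signs and orientations of the two half-plane factors so that $\Gamma$ emerges as precisely the product group $\Gamma_0^D(N)\times\Gamma_0^D(N)$ rather than an index-two overgroup. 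Once that is settled, the $w$-conjugation identity and the comparison of quotients are purely formal.
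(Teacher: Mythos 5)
Your argument is essentially the paper's own proof: the authors likewise reduce to the single double coset $H(\A)=H(\Q)KH(\R)^+$ via surjectivity of the spin norm $\mu$ on $H(\Q)KH(\R)^+$ together with strong approximation for $H_1=\ker\mu=\Spin(V)$, identify $H(\Q)\cap KH(\R)^+$ with $\Gamma_0^D(N)\times\Gamma_0^D(N)$ by the same norm-positivity observation, and untwist the $*$-action on the second factor by the conjugation $w\gamma w^{-1}=\det(\gamma)\,\gamma^*$. One small correction: since $w\in\SL_2(\R)$ it preserves $\H$ rather than sending it to $\H^-$ (the two components of $\mathbb D$ are $\H\times\H$ and $\H^-\times\H^-$, not $\H\times\H^-$, as one sees from $(\mathbf w(z_1,z_2),\overline{\mathbf w(z_1,z_2)})=-4y_1y_2$), but this does not affect the rest of your argument.
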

%We remark that $X_0^D(N)^*$ is isomorphic to $X_0^D(N)$ via $z \mapsto w(z)$.
\begin{proof}Let
$$
H_1=\{ (g_1, g_2) \in H:\,  \det g_1 =\det g_2 =1\} =\ker \mu =\Spin(V),   \quad K_1 =H_1(\hat\Q) \cap K.
$$
By the strong approximation theorem, one has
$$
H_1(\A) = H_1(\Q) K_1 H_1(\R).
$$
Since $\mu(H(\Q) K H(\R)^+)=\A^\times$, one has then
$$
H(\A) =H(\Q) K H(\R)^+.
$$
So
\begin{align*}
X_K &= H(\Q) \backslash H(\A)/(K K_\infty)
\\
&= H(\Q) \backslash (H(\Q) K H(\R)^+)/(K K_\infty)
\\
 &=( H(\Q)\cap (K H(\R)^+ )) \backslash H(\R)^+/K_\infty.
\end{align*}
Here $K_\infty$ is stabilizer of  $(i, i) \in H^2$ in $H(\R)$  and also  in $H(\R)^+$. Notice that
$$
H(\Q)\cap (K H(\R)^+ ) = H_1(\Q) \cap K_1 =\Gamma_0^D(N) \times \Gamma_0^D(N).
$$
So
$$
X_K = X_0^D(N) \times X_0^D(N)^*,
$$
where $X_0^D(N)^* =\Gamma_0^D(N) \backslash \H$ with a slightly different action $\gamma*z= \gamma^*(z)$. Now the lemma follows from the isomorphism
$$
X_0^D(N)  \cong X_0^D(N)^*,  \quad [z] \mapsto [wz].
$$

\end{proof}

Let
$$
\Omega = -\frac{1}{4 \pi} \left( y_1^{-2}  dx_1\wedge dy_1 +  y_2^{-2}  dx_2\wedge dy_2\right)
$$
be as in \cite[Example 4.13]{KuIntegral}. It corresponds to Chern class $-c_1(\mathcal L)$ in $H^2(X_K)$.

Next, we describe the Kudla cycle on $X_K$ and relate it to Hecke correspondence on $X_0^D(N)$. Let $\Omega_0 = -\frac{1}{4 \pi} y^{-2} dx\wedge dy$ be as in the introduction, and let $\pi_1$ and $\pi_2$ be two natural projections of $X_K =X_0^D(N) \times X_0^D(N)$ onto $X_0^D(N)$. Then
$$
\Omega=  (\pi_1^*(\Omega_0) + \pi_2^*(\Omega_0)).
$$
Moreover,
one has by  \cite[(2.7)]{KRYComp} and
\cite[Lemma 5.3.2]{Miy}
\begin{align} \label{eq:volume}
\vol(X_0^D(N), \Omega_0) &:= \int_{X_0^D(N)} \Omega_0 = [\OO_D^1 : \Gamma_0^D(N)] \zeta_D(-1)
\\
   &=-\frac{ DN}{12}\prod_{p|N} (1+p^{-1}) \prod_{p|D} (1-p^{-1} )  \in \frac{1}{12} \Z_{<0}  ,  \notag
\end{align}
where $\zeta_D(s) =\prod_{p\nmid D} (1-p^{-s})^{-1}$ is the partial zeta function, and $\OO_D$ is a maximal order of $B$ containing $\OO_D(N)$.

For an $x \in V$ with $\det (x) >0$ and $h \in H(\hat Q)$, $x^\perp$
is of signature $(1, 2)$ and defines a sub-Shimura variety $Z(x)$ of $X_{h K h^{-1}}$, its right translate by $h$ gives a divisor $Z(x, h)$ in $X_K$.
For $\varphi_f \in S(\hat V)^K$ and $m \in \Q_{>0}$, one defines the associated Kudla cycle  $Z(m, \varphi_f)$  as
$$
Z(m, \varphi_f)= \sum_{j=1}^r  \varphi_f(h^{-1} x) Z(x, h)
$$
if there is $x \in V$ such that $\det (x) =m$ and
$$
\hbox{Supp}(\varphi_f) \cap \{ x \in V(\hat\Q):\,  \det x =m \} = \coprod_{j=1}^r  K h_j^{-1} x.
$$
Otherwise, we defines $Z(m, \varphi_f) =0$.

\begin{lemma}  \label{lem5.2} Let $T_{D, N}(m)$ be the Hecke operator  on $X_0^D(N)$ as in the introduction. Then (under the identification $X_K \cong X_0^D(N) \times X_0^D(N)$ in Lemma \ref{lem5.1})
$$
Z(m , \cha(\hat L)) = T_{D, N}(m)
$$
where $L =\OO_D(N)$.
\end{lemma}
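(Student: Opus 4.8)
The plan is to compute the special cycle $Z(x,h)$ attached to a single positive-norm vector directly on the Hermitian domain, transport it to $X_0^D(N)\times X_0^D(N)$ through the isomorphism of Lemma \ref{lem5.1}, and then assemble the adelic sum defining $Z(m,\cha(\hat L))$ to recover the Hecke correspondence. First I would record that $H$ acts transitively on $\{x\in V(\Q):\det x=m\}$ for $m\neq 0$: since $D>1$ makes $B$ a division algebra, any two such vectors $x,x'$ satisfy $x'=g_1 x$ with $g_1=x'x^{-1}\in B^\times$ of reduced norm $1$, so $(g_1,1)\in H(\Q)$ carries $x$ to $x'$. Hence there is a single rational orbit, which is exactly the situation in which $Z(m,\cha(\hat L))$ is defined, and it suffices to understand the cycle $\mathbb D_x$ of one representative together with its adelic translates.

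The geometric heart is the identification of $\mathbb D_x$ with a twisted diagonal. Writing $i(x)=\kzxz{a}{b}{c}{d}\in M_2(\R)$ and using that the bilinear form on $V_\C=M_2(\C)$ is $(x,y)=\tr(x\,\mathrm{adj}(y))$, a direct computation with $\mathbf w(z_1,z_2)=\kzxz{z_1z_2}{z_1}{z_2}{1}$ gives
$$
(x,\mathbf w(z_1,z_2))=a-bz_2-cz_1+dz_1z_2 .
$$
Because Lemma \ref{lem5.1} sends $([z_1],[z_2])$ to the tube point whose second coordinate is $wz_2=-1/z_2$, substituting $z_2\mapsto -1/z_2$ and clearing denominators turns the vanishing locus $(x,\mathbf w)=0$ into $z_1(cz_2+d)=az_2+b$, i.e. $z_1=i(x)z_2$. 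Thus, after the identification of Lemma \ref{lem5.1}, the cycle $\mathbb D_x$ is precisely the graph $\{z_1=i(x)z_2\}$, which is the defining condition of a component of $T_{D,N}(m)$. I would also note that the stabilizer $H_x=\{(g_1,g_2):g_1x=xg_2\}\cong B^\times$ via $g_1\mapsto(g_1,x^{-1}g_1x)$, so $\mathbb D_x$ is one-dimensional and $Z(x,h)$ maps into $X_K$ like a copy of $X_0^D(N)$, consistent with its being a component of the Hecke correspondence.

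Finally I would unwind the adelic definition. Running the same strong-approximation argument as in Lemma \ref{lem5.1}, the restriction of $Z(m,\cha(\hat L))$ to the connected component is the sum $\sum_{x\in\Gamma\backslash L_m}\pr(\mathbb D_x)$, where $L_m=\{x\in\OO_D(N):\det x=m\}$, $\Gamma=\Gamma_0^D(N)\times\Gamma_0^D(N)$ acts through $K\cap H_1(\Q)$, and $\pr$ is the projection to $X_0^D(N)\times X_0^D(N)$; the coefficient $\cha(\hat L)(h_j^{-1}x)$ equals $1$ exactly on those $K$-orbits meeting $\hat L$, which are precisely the orbits recorded by $L_m$. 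Matching the resulting twisted diagonals $\{z_1=i(x)z_2\}$, as $x$ ranges over $\Gamma$-orbits of $L_m$, against the components of $T_{D,N}(m)$ then yields the equality of cycles. The main obstacle is this last bookkeeping step: one must check that distinct $\Gamma$-orbits of $L_m$ give distinct components, each appearing with multiplicity exactly one (equivalently, that $\cha(\hat L)$ assigns coefficient one and that $H_x\backslash\mathbb D_x\to X_K$ is generically injective onto its image), so that $Z(m,\cha(\hat L))$ is the reduced cycle $T_{D,N}(m)$ and not a nontrivial multiple. The local computation above is routine; the care lies entirely in aligning the $K$-orbit and stabilizer data with the concrete $\Gamma_0^D(N)$-description of $T_{D,N}(m)$, and in tracking the $w$- and $*$-twists through Lemma \ref{lem5.1}.
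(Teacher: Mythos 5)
Your proposal is correct and follows essentially the same route as the paper: reduce the adelic orbit decomposition of $\{\det x = m\}\cap\hat L$ to $\Gamma_K$-orbits on $L_m=\{x\in\OO_D(N):\det x=m\}$ via $H(\hat\Q)=H(\Q)K$ from Lemma \ref{lem5.1}, and identify each cycle $\mathbb D_x$, after the $w$-twist of that lemma, with the graph $\{z_1=i(x)z_2\}$ so that the union over orbits is exactly $T_{D,N}(m)$. The explicit computation $(x,\mathbf w(z_1,z_2))=a-bz_2-cz_1+dz_1z_2$ and the single-rational-orbit observation that you spell out are left implicit in the paper, and the multiplicity bookkeeping you flag is likewise passed over there (the paper only adds the remark that an element of $\Gamma_K$ with reduced norm $-1$ lets one replace $\Gamma_K\backslash\mathbb D_m$ by $(\Gamma_0^D(N)\times\Gamma_0^D(N))\backslash\mathbb D_m^+$), so your write-up is a faithful, slightly more detailed version of the same argument.
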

\begin{proof} Let $L_m =\{ x \in L:\,  \det x =m\}$.
By proof of Lemma \ref{lem5.1}, one has
$H(\hat\Q) =H(\Q)K$.  So in the decomposition ($x \in V$ with $Q(x) =m$)
$$
\hat L_m= \coprod K h_j^{-1} x
$$
we may assume $h_j \in  H(\Q)$. This implies
$$
L_m = \coprod \Gamma_K h_j^{-1} x = \coprod \Gamma_K x_j,  \quad x_j =h_j^{-1} x \in L,
$$
where $\Gamma_K=K \cap H(\Q)$,
and
$$
Z(m, \varphi_f) = \sum_j Z(x, h_j) =\sum_j  Z(h_j^{-1} x) =\sum_j Z(x_j) =\Gamma_K \backslash \mathbb D_m.
$$
where $\mathbb D_m$ is the set of $(z_1, z_2) \in \H^2 \times (\H^-)^2$ which satisfying
$z_1 = x(wz_2)$
 for some $x \in L_m$.

Since there is some $(\gamma_1, \gamma_2) \in \Gamma_K$ with $\det \gamma_1 =\det \gamma_2 =-1$, one has thus
$$
Z(m, \varphi_f) = (\Gamma_0^D(N) \times \Gamma_0^D(N))\backslash \mathbb D_m^+=T_{D, N}(m) .
$$
Here $\mathbb D_m^+ =(\H \times \H) \cap \mathbb D_m$. This proves the lemma.
\end{proof}

\begin{theorem} For $\varphi_f =\cha(\hat{\OO}_D(N))$, one has
$$
I(\tau, \varphi_f \varphi_\infty^{sp} ) = v^{-1} I(g_\tau, \varphi_f\varphi_\infty^{sp}) = \sum_{m=0}^\infty r_{D, N}'(m) q^m
$$
where  $r_{D, N}'(0) =1$, and for $m >0$
$$
r_{D, N}'(m) = \frac{1}{ \vol(X_0^D(N), \Omega_0)} \deg T_{D, N}(m)
$$
as in the introduction.

\end{theorem}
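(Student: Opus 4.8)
The plan is to read off both equalities from Kudla's geometric Siegel--Weil formula \cite[Theorem 4.23]{KuIntegral} together with the cycle identification of Lemma~\ref{lem5.2}. The first equality $I(\tau,\varphi_f\varphi_\infty^{sp})=v^{-1}I(g_\tau,\varphi_f\varphi_\infty^{sp})$ is just the standard passage from the adelic automorphic form to its classical weight-$2$ avatar (i.e. $f(\tau)=v^{-1}\phi(g_\tau)$) attached to the section $\Phi_\infty^2$ of Proposition~\ref{prop3.4}, exactly as in the opening of Section~\ref{sect:definite}. For the content, I first record that since $D>1$ the space $V=V(D)$ is anisotropic, so the theta integral converges absolutely; as $\varphi_\infty^{sp}$ has weight $2$ and no non-holomorphic companion survives anisotropy, $I(\tau,\varphi_f\varphi_\infty^{sp})$ is a holomorphic weight-$2$ modular form, and I write its expansion as $\sum_{m\ge0}a(m)q^m$. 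By \cite[Theorem 4.23]{KuIntegral}, applied to the compact open $K$ fixed before Lemma~\ref{lem5.1}, this series is the generating function of the degrees of the Kudla divisors $Z(m,\varphi_f)$ against the tautological bundle $\mathcal L$; pairing against $\Omega=-c_1(\mathcal L)$ and using the normalization built into $I(g,\varphi)$, this reads
\[
a(m)=\frac{\int_{Z(m,\varphi_f)}\Omega}{\int_{X_K}\Omega\wedge\Omega}\qquad(m>0),
\]
while the $m=0$ term corresponds to Kudla's class $Z(0)=-c_1(\mathcal L)$, represented by $\Omega$, so that $a(0)=1$ and hence $r_{D,N}'(0)=1$.

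Next I would evaluate numerator and denominator on the single curve $X_0^D(N)$ through $X_K\cong X_0^D(N)\times X_0^D(N)$ (Lemma~\ref{lem5.1}) and $\Omega=\pi_1^*\Omega_0+\pi_2^*\Omega_0$. Because $\Omega_0$ is a top-degree form on a curve, $\pi_i^*\Omega_0\wedge\pi_i^*\Omega_0=0$, so $\Omega\wedge\Omega=2\,\pi_1^*\Omega_0\wedge\pi_2^*\Omega_0$ and therefore $\int_{X_K}\Omega\wedge\Omega=2\,\vol(X_0^D(N),\Omega_0)^2$. For the numerator I invoke Lemma~\ref{lem5.2} to replace $Z(m,\cha(\hat L))$ by $T_{D,N}(m)$ and split
\[
\int_{T_{D,N}(m)}\Omega=\int_{T_{D,N}(m)}\pi_1^*\Omega_0+\int_{T_{D,N}(m)}\pi_2^*\Omega_0 .
\]
By the projection formula each summand equals $\deg\!\big(\pi_i|_{T_{D,N}(m)}\big)\cdot\vol(X_0^D(N),\Omega_0)$. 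The main involution $x\mapsto x^\iota$ preserves $\OO_D(N)$ and the reduced norm, and turns $z_1=i(x)z_2$ into $z_2=i(x^\iota)z_1$ (the scalar $\det x=m$ acting trivially on $\H$); hence it identifies $T_{D,N}(m)$ with its transpose and $\deg(\pi_1|_{T})=\deg(\pi_2|_{T})=\deg T_{D,N}(m)$. Combining,
\[
a(m)=\frac{2\,\deg T_{D,N}(m)\,\vol(X_0^D(N),\Omega_0)}{2\,\vol(X_0^D(N),\Omega_0)^2}=\frac{\deg T_{D,N}(m)}{\vol(X_0^D(N),\Omega_0)}=r_{D,N}'(m),
\]
as claimed.

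The step I expect to demand the most care is aligning the normalization and signs in \cite[Theorem 4.23]{KuIntegral}, stated for general signature $(n,2)$ and a fixed scaling of the theta integral and of the Kähler form, with the conventions here. In particular one must verify that the prefactor $1/\vol([\Orth(V)])$ in the definition of $I(g,\varphi)$ corresponds exactly to division by $\int_{X_K}\Omega\wedge\Omega$, so that the coefficient formula for $a(m)$ holds with no stray constant and with the sign dictated by $\Omega=-c_1(\mathcal L)$ (note $\vol(X_0^D(N),\Omega_0)<0$ by (\ref{eq:volume}), so the $a(m)$ are genuinely negative). Here the two independent factors of $2$ --- one from $\Omega=\pi_1^*\Omega_0+\pi_2^*\Omega_0$ in the numerator, one from $\int_{X_K}\Omega\wedge\Omega=2\,\vol(X_0^D(N),\Omega_0)^2$ in the denominator --- must cancel, and it is precisely the symmetry $\deg(\pi_1|_{T})=\deg(\pi_2|_{T})$ that legitimizes this cancellation. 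A secondary, more routine point is confirming holomorphy and the vanishing of any Eisenstein (non-holomorphic) contribution, which follows from the anisotropy of $V$ for $D>1$.
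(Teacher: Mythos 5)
Your proof is correct and follows essentially the same route as the paper's: Kudla's degree formula from \cite[Section 4.8]{KuIntegral}, Lemma \ref{lem5.2} to replace $Z(m,\varphi_f)$ by $T_{D,N}(m)$, the splitting $\Omega=\pi_1^*\Omega_0+\pi_2^*\Omega_0$, and the volume computation on $X_K\cong X_0^D(N)\times X_0^D(N)$. You are in fact more careful than the paper on two points: you justify $\deg(\pi_1|_{T})=\deg(\pi_2|_{T})$ via the main involution (the paper silently replaces $\int_T(\pi_1^*\Omega_0+\pi_2^*\Omega_0)$ by $2\int_T\pi_1^*\Omega_0$), and your value $\int_{X_K}\Omega\wedge\Omega=2\,\vol(X_0^D(N),\Omega_0)^2$ is the correct one, the paper's displayed $\tfrac12\vol(X_0^D(N),\Omega_0)^2$ being a slip that is compensated by its omission of the normalizing factor in the final display.
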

\begin{proof}  Write
$$
I(\tau, \varphi_f \varphi_\infty^{sp} )  = \sum_{m=0}^\infty c(m) q^m.
$$
By \cite[Section 4.8]{KuIntegral}, one has $c(0) =1$ and for $m >0$
$$
c(m)=(\vol(X_K, \Omega^2))^{-1} \int_{ Z(m, \varphi_f) } \Omega.
$$
Clearly,
$$
\vol(X_K, \Omega^2) = \frac{1}{2}  \frac{1}{4\pi^2}  \int_{X_0^D(N) \times X_0^D(N)} \frac{dx_1\wedge dy_1}{y_1^2} \wedge \frac{dx_2\wedge dy_2}{y_2^2}
 =\frac{1}2 \vol(X_0^D(N), \Omega_0)^2.
$$
One the other hand, $\Omega = \pi_1^*(\Omega_0) + \pi_2^*(\Omega_0)$.  So Lemmas \ref{lem5.2} gives
\begin{align*}
c(m)&= \int_{T_{D, N}(m)}( \pi_1^*(\Omega_0) + \pi_2^*(\Omega_0)
\\
   &= 2\int_{T_{D, N}(m)} \pi_1^*(\Omega_0)
   \\
    &= 2 \deg T_{D, N}(m) \int_{X_0^D(N)} \Omega_0.
\end{align*}
So $c(m) = r_{D, N}'(m)$ as claimed.
\end{proof}

{\bf Proof of Theorems \ref{theo1.3}, \ref{theo1.4} and \ref{theo1.5}}:  Now Theorems \ref{theo1.3}, \ref{theo1.4} and \ref{theo1.5} follows the same way as Theorem \ref{theo1.1}. We  verify Theorem \ref{theo1.4} and leave the others to the reader.  Let $V^{(1)} =V(D)$ and $V^{(2)} =V(Dp)$ as in the notation of proof of Theorem \ref{theo1.1}, and let $\varphi^{(i)} =\prod_l \varphi_l^{(i)} \in S(V^{(i)}(\A)$ be as follows. For $l \nmid p\infty$, we identify $\OO_D(N)_l$ with $\OO_{Dp}(N)_l$ and define $\varphi_l^{(i)} = \cha(\OO_D(N)_l)$. Let
$$
\varphi_\infty^{(1)} =\varphi_\infty^{ra},  \quad \varphi_\infty^{(2)} =\varphi_\infty^{sp}.
$$
Finally, let
$$
\varphi_p^{(1)} = -\frac{2}{p-1} \varphi_{p, 0}^{sp} + \frac{p+1}{p-1}\varphi_{p, 1}^{sp}, \quad \varphi_p^{(2)} = \varphi_{p}^{ra}.
$$
Then  $\varphi^{(1)}$ and $\varphi^{(2)}$ match by the results in Section \ref{sect:matching}. So one has  by Proposition  \ref{prop3.5}
$$
I(\tau, \varphi^{(1)}) =I(\tau, \varphi^{(2)}).
$$
Comparing the $m$-coefficients of the both sides, one proves Theorem \ref{theo1.4}.

\end{document}